\newtheorem{theorem}{Theorem}
\newtheorem{lemma}{Lemma}[section]
\newtheorem{corollary}{Corollary}[section]
\newcommand{\CG}[1]{\textcolor{magenta}{#1}}
\newcommand{\bm}[1]{\mbox{\boldmath{$#1$}}}
\newcommand{\trans}{^{\mbox{\tiny{T}}}}
\date{}
\begin{document}
\bibliographystyle{plainnat}
%\bibliography{compare}
%\linenumbers
\begin{center}
{\Large\bf A test for the $k$ sample Behrens-Fisher problem in}\\
\bigskip
{\Large\bf high dimensional data}\\
\bigskip
\bigskip
Mingxiang Cao$^{a}$, Junyong Park$^{b,*}$, Daojiang He$^{a}$\\
\bigskip
\bigskip
    {\it\small  $^{a}$School of Mathematics $\&$ Computer Science, Anhui Normal University, Wuhu 241000, China}\\
    {\it\small $^{b}$Department of Mathematics and Statistics, University of
Maryland Baltimore County, 1000 Hilltop Circle, Baltimore, MD 21250,
USA.}\\

\footnotetext[1]{*Corresponding author.}

\footnotetext[2]{~~E-mail address:~\url{junpark@umbc.edu} (J.Y.
Park).}
\end{center}

\begin{abstract}
\noindent In this paper, the $k$ sample Behrens-Fisher problem is
investigated in high dimensional setting. We propose a new test statistic
and  demonstrate that the proposed test is expected to have more
powers than some existing test especially when sample sizes are unbalanced.
We provide theoretical investigation as well as numerical studies on
both sizes and powers of the proposed tests and existing test.
Both theoretical comparison of  the asymptotic power functions
and numerical studies show that the proposed test
tends to have more powers than existing test in many
cases of unbalanced sample sizes.
\end{abstract}

\noindent{\it \textbf{AMS 2010 subject classifications}}\ \ Primary 62H15; secondary 62E20\\
\noindent{\it \textbf{Keywords}}\ \ High dimensional data; Behrens-Fisher problem; Martingale central limit theorem; Asymptotic null and non-null distributions\\

\thispagestyle{empty}

\section{Introduction}
In many contemporary applications, high and ultrahigh dimensional
data are increasingly available, such as molecular biology,
genomics, fMRI, finance and transcriptomics. A common feature for
high and ultrahigh dimensional data is that the data dimension is
larger or much larger than the sample size, the so called ``large
$p$, small $n$" phenomenon where $p$ is the data dimension and $n$
is the sample size. In high dimensional settings, classical methods
may be invalid, or not applicable at all. Hence, there has been
growing interest in developing testing procedures which are better
suited to deal with statistical problems in high dimensional
setting. Testing hypotheses in high dimension is one of important
issues in high dimensional data which has attracted a great deal of
attention in recent decades. In two sample testing in high
dimension, there have been numerous studies such as
\cite{Bai:Saranadasa:1996},   \cite{Srivastava:2008:2009:2013},
\cite{Chen:Qin:2010}, \cite{Aoshima:Yata:2011},
%\cite{Park:Ayyala:2012},
\cite{Park:Ayyala:2013}, \cite{Feng:2015},
\cite{Zhou:Kong:2015}, \cite{{Ma:Wang:2015}}, \cite{Ghosh:2016} and
\cite{Zhao:Xu:2016}. For multivariate analysis of variance (MANOVA),
see \cite{Fujikoshi:2004}, \cite{Schott:2007},
\cite{Srivastava:2007}, \cite{Cai:2014} and \cite{Cao:2015}. More
specifically, when there are $k$ groups and
$\bm{X}_{l1},\cdots,\bm{X}_{ln_{l}}$ represent $p \times 1$ random
samples from the $l$th group with unknown mean vector $\bm{\mu}_{l}$
and positive definite covariance matrix $\bm{\Sigma}_{l}$ for $l=1,
\ldots, k$, it is of interest to test
\begin{equation}\label{eq1}
H_{0}:\ \bm{\mu}_{1}=\cdots=\bm{\mu}_{k}\ \ \mbox{versus}\ \ H_{1}:\
H_{0}\ \mbox{is not true.}
\end{equation}
In particular, when all covariance matrices are homogeneous such as
$\bm{\Sigma}_{1}=\cdots=\bm{\Sigma}_{k}$, testing \eqref{eq1}
is known as MANOVA.
On the other hand,
\cite{Hu:Bai:2015} and \cite{Cao:2014} recently proposed the same
test statistic to test \eqref{eq1} when covariance matrices are not
necessarily homogeneous. This is also known as the $k$ sample
Behrens-Fisher (BF) problem which does not require
$\bm{\Sigma}_{1}=\cdots=\bm{\Sigma}_{k}$.
The homogeneity of covariance matrices is a strong condition
in practice. In fact, it is not straightforward to verify the
homogeneity of covariance matrices especially in high dimensional
data. Therefore, unless there is any strong evidence supporting the homogeneity of covariance matrices,
it is natural to allow different covariance matrices in practice.

The main goal of this paper is to propose a new test statistic in
the $k$ sample Behrens-Fisher problem. It will be shown that the
proposed test behaves differently from existing test such as
\cite{Hu:Bai:2015} when sample sizes are unbalanced. We will discuss
such differences between  the proposed test and the test in
\cite{Hu:Bai:2015} through both theoretical and numerical
comparisons under a variety of situations. We observe that the
proposed test has some advantage in powers compared to
\cite{Hu:Bai:2015} in many cases situations through theoretical and
numerical comparisons.

The remainder of the paper is organized as follows.
Section~\ref{sec:2} first presents conditions of statistical model.
Some notations used throughout the paper are defined and assumptions
are also announced for the theoretical study. In
Section~\ref{sec:3}, we give the new test statistic and investigate
its asymptotic behavior under $H_{0}$ and $H_{1}$.  Theoretical
comparisons and numerical studies on the proposed test and the Hu's
test are carried out in Section~\ref{sec:4}.  Concluding remarks is presented in \ref{sec:5}.

\section{Preliminaries}\label{sec:2}
In this section, we give notations and the statistical model for the
$k$ sample BF problem. Some assumptions are also illustrated.

\subsection{Notations}
The following notations will be used in subsequent
exposition. All vectors are column and $\bm{M}\trans$ denotes the
transpose of $\bm{M}$. All vectors and matrices are bold-faced. For two
sequences of real numbers $\{a_{n}\}$ and $\{b_{n}\}$, we write
$a_{n}=O(b_{n})$ if there exists a constant $c$ such that
$|a_{n}|\leq c|b_{n}|$ holds for all sufficiently large $n$, and
write $a_{n}=o(b_{n})$ if $\lim\limits_{n\to\infty}a_{n}/b_{n}=0$.
For a random sequence $Z_{n}$ and a random variable $Z$,
$Z_{n}\overset{pr}\longrightarrow Z$ and
$Z_{n}\overset{d}\longrightarrow Z$ denote $Z_{n}$ converges to $Z$
in probability and in distribution, respectively, as $n\to\infty$.

Let $\overline{\bm{X}}_{l}$ and $\bm{S}_{ln_{l}}$
 be the sample mean vector and sample
covariance matrix from the $l$th group  for $l=1,\ldots,k$.
Let $\overline{\bm{X}}$ be the pooled sample mean vector which is $\frac{1}{\sum_{l=1}^k n_l}\sum_{l=1}^k n_l \bm{X}_l$.
If we define  $n_{l1}:=[n_{l}/2]+1$ and $n_{l2}:=n_{l}-n_{l1}$
where $[x]$ is the integer part of $x$ for $x\geq 0$, then
 $\overline{\bm{X}}_{ln_{l1}}$, $\bm{S}_{ln_{l1}}$ and
$\overline{\bm{X}}_{ln_{l2}}$, $\bm{S}_{ln_{l2}}$ stand for the
sample mean vectors and covariances matrices of the first $n_{l1}$
samples and the rest $n_{l2}$ samples, respectively.
We also define the pooled sample covariance denoted by
\begin{eqnarray}
\bm{E}_{1}=\dfrac{1}{n-k}\sum\limits_{l=1}^{k}\sum\limits_{i=1}^{n_{l}}(\bm{X}_{li}-\overline{\bm{X}}_{l}){(\bm{X}_{li}-\overline{\bm{X}}_{l})\trans}
\label{eqn:E1}
\end{eqnarray}
and
\begin{eqnarray}
\bm{E}_{2}=\dfrac{1}{k-1}\sum\limits_{l=1}^{k}n_{l}(\overline{\bm{X}}_{l}-\overline{\bm{X}}){(\overline{\bm{X}}_{l}-\overline{\bm{X}})\trans}.
\label{eqn:E2}
\end{eqnarray}
Finally, we define  $ \widetilde{\bm{\mu}}=\dfrac{1}{n}\sum\limits_{l=1}^{k}n_{l}\bm{\mu}_{l}$
and
$\overline{\bm{\mu}}=\dfrac{1}{k}\sum\limits_{l=1}^{k}\bm{\mu}_{l}$
as weighted mean vector and average mean vector of the population
means $\bm{\mu}_{1},\ldots,\bm{\mu}_{k}$, respectively.

\subsection{Model}
We assume that random samples $\bm{X}_{li}$'s are generated from a
factor model in multivariate analysis which are commonly used in
many existing studies, for example, \cite{Bai:Saranadasa:1996},
\cite{Chen:Qin:2010} and \cite{Hu:Bai:2015}. More formally, some
moment conditions on the distributions of random samples
$\bm{X}_{li}$ are imposed as follows; for every
$i\in\{{1,\ldots,n_{l}}\}$ and $l\in\{{1,\ldots,k\}}$, we consider
\begin{align}\label{eq2}
\bm{X}_{li}=\bm{\Gamma}_{l}\bm{Z}_{li}+\bm{\mu}_{l}
\end{align}
where $\bm{\Gamma}_{l}$ is a $p\times r$ matrix for some $r\geq p$
such that $\bm{\Gamma}_{l}\bm{\Gamma}_{l}\trans=\bm{\Sigma_{l}}$ and
$\{\bm{Z}_{li}\}_{i=1}^{n_{l}}$ are $r-$variate independent and
identically distributed (i.i.d.) random vectors with
${\rm{E}}({\bm{Z}}_{li})=0$ and
${\rm{Var}}({\bm{Z}}_{li})=\bm{I}_{r}$.
Moreover, we assume
${\rm{E}}(z_{lij}^{4})=3+\gamma_{l}<+\infty$ and $z_{lij}$'s are
independent for all $j=1,\ldots,r$; $i=1,\ldots,n_{l}$ and
$l=1,\ldots,k$, where
${\bm{Z}}_{li}=(z_{li1},\ldots,z_{lir})\trans$.

\subsection{Assumptions}
We first state the main conditions which will be used in the proof of asymptotic results of our proposed test.
The three conditions, (A1), (A2) and (A3) are as follows:

\begin{itemize}
\item[(A1)]$\lim\limits_{n\to\infty}n_{l}/n=\lambda_{l}\in(0,1)$ for $l=1,\ldots,k$.\label{A1}
\item[(A2)]${(\bm{\mu}_{l}-\bm{\mu}_{s})\trans}\bm{\Sigma}_{l}(\bm{\mu}_{l}-\bm{\mu}_{s})=o\left\{n^{-1}{\rm{tr}}\left(\sum\limits_{i=1}^{k}\bm{\Sigma}_{i}\right)^{2}\right\}$ for
$l,s\in\{1,\ldots,k\}$ as $n$ and $p\to\infty$.\label{A2}

\item[(A3)]${\rm{tr}}(\bm{\Sigma}_{l}\bm{\Sigma}_{s}\bm{\Sigma}_{i}\bm{\Sigma}_{j})=o\left\{{\rm{tr}}^{2}\left(\sum\limits_{i=1}^{k}\bm{\Sigma}_{i}\right)^{2}\right\}$ for $l,s,i,j\in\{1,\ldots,k\}$ as $p\to\infty$.\label{A3}
\end{itemize}
(\CG{A1}) implies that all sample sizes have the same increasing rate except constant terms.
(\CG{A2}) is used in a local alternative for the power function of the proposed test
and it is actually an extension of (3.3) in \cite{Chen:Qin:2010} to the case of multi-groups ($k\geq 2$).
Similarly, (\CG{A3}) can be
seen as an extension of the condition (3.6) in
\cite{Chen:Qin:2010} to the case of multi-groups.

\section{Main results}\label{sec:3}
In this section we present a new proposed test statistic and its asymptotic
properties  under the conditions (\CG{A1})--(\CG{A3}).

\subsection{The proposed test statistic}
Our proposed test is motivated by \cite{Schott:2007} and
``leave-one-out" idea of \cite{Chen:Qin:2010}. \cite{Schott:2007}
tested the hypothesis \eqref{eq1} under MANOVA based on
\begin{eqnarray}
T_{S}:={\rm{tr}}(\bm{E}_{2})-{\rm{tr}}(\bm{E}_{1}).
\label{TS}
\end{eqnarray}
where $\bm{E}_1$ and $\bm{E}_2$ are defined in \eqref{eqn:E1} and
\eqref{eqn:E2}. The asymptotic normality of $T_{S}$ was derived in
\cite{Schott:2007}, hence a test statistic was formulated by
standardizing $T_{S}$ with an asymptotically ratio-consistent
estimator of its standard deviation. The main assumptions in
\cite{Schott:2007} are as follows:
\begin{itemize}
\item[(A4)]The random samples ${\bm{X}}_{li}$'s come from normal model
$N(\bm{\mu}_{l},\bm{\Sigma})$ for $i=1,\ldots,n_{l}$ and
$l=1,\ldots,k$.\label{A4}
\item[(A5)]$\lim\limits_{n\to\infty}p/n\in(0,1)$.\label{A5}
\item[(A6)]$0<\lim\limits_{p\to\infty}{\rm{tr}}(\bm{\Sigma}^{2i})/p<\infty$ for $i=1$ \text{or} $2$.\label{A6}
\end{itemize}
With (\CG{A4}),  the asymptotic results in \cite{Schott:2007}  were
derived under MANOVA which is the case of homogeneous covariance
matrices under multivariate normality of data.  (\CG{A5}) means that
the sample dimension $p$ and sample size $n$ have the same order and
the total number of samples should be larger than the dimension $p$.
However, our proposed test and \cite{Hu:Bai:2015} need some implicit
relationship between $n$ and $p$ through the condition (\CG{A3})
rather than explicit restriction on $n$ and $p$ as in (\CG{A5}).
Under $\bm{\Sigma}_{1}=\cdots=\bm{\Sigma}_{k}=\bm{\Sigma}$,
(\CG{A6}) is a stronger condition than (\CG{A3}) since (\CG{A3}) is
$\rm{tr}(\Sigma^4) =o(\rm{tr}^2(\bm\Sigma^2))$ showing that
(\CG{A6}) implies (\CG{A3}) through
${\rm{tr}}(\bm{\Sigma}^{4})/{\rm{tr}}^{2}(\bm{\Sigma}^{2})=p^{-1}{\rm{tr}(\bm{\Sigma}^{4})/p}\Big/{\rm{tr}}^{2}(\bm{\Sigma}^{2})/p^{2}=o(1)$
as $p\to\infty$. Thus, considering all these,  it is clear that
(\CG{A4})-(\CG{A6}) are stronger conditions than
(\CG{A1})-(\CG{A3}).

We modify $T_{S}$ in \eqref{TS} by removing the terms $\bm{X}_{li}\trans\bm{X}_{li}$
which is also done in \cite{Chen:Qin:2010}
and get a test
statistic denoted by $T$  as follows:
\begin{eqnarray}
T:=\sum\limits_{l=1}^{k}\frac{n-n_{l}}{n(n_{l}-1)}\sum\limits_{i\neq
j}^{n_{l}}{\bm{X}_{li}\trans}\bm{X}_{lj}-\sum\limits_{l\neq
s}^{k}\frac{n_{l}n_{s}}{n}{\overline{\bm{X}}_{l}\trans}\overline{\bm{X}}_{s}.
\label{eqn:T}
\end{eqnarray}
It is worth pointing out that, for two sample BF problem, the
statistic $T$ is the same as \cite{Chen:Qin:2010} except a constant
factor $n_{1}n_{2}/n$. Elementary derivation shows
\begin{eqnarray}
{\rm{E}}(T)=\sum\limits_{l=1}^{k}n_{l}(\bm{\mu}_{l}-\widetilde{\bm{\mu}})\trans(\bm{\mu}_{l}-\widetilde{\bm{\mu}})
\label{eqn:ET}
\end{eqnarray}
where $\widetilde{\bm{\mu}} = \sum_{l=1}^k \lambda_l \bm{\mu}_l$ for $\lambda_l =n_l/n$.
In \cite{Hu:Bai:2015}, their test statistic is based on a statistic,
say $T_H$, of which the expected value is
${\rm{E}}(T_H)=\sum\limits_{l=1}^{k}(\bm{\mu}_{l}-\overline{\bm{\mu}})\trans(\bm{\mu}_{l}-\overline{\bm{\mu}})$ where $\overline{\bm{\mu}}=\frac{1}{k}\sum_{l=1}^k \bm{\mu}_l$.
The deviation of $\bm{\mu}_l$ from $\widetilde{\bm{\mu}}$ in \eqref{eqn:ET} is
weighted by the corresponding sample size $n_l$ which can emphasize
the deviations of populations with large sample sizes.  On the other
hand, $T_H$ in \cite{Hu:Bai:2015} gives all equal weight to the
deviations of $\bm{\mu}$ from overall mean $\overline{\bm{\mu}}$.
This difference leads to different asymptotic powers of test
statistics based on $T$ and $T_H$.

We now propose a test statistic based on $T$ in \eqref{eqn:T}.
It can be shown that the variance of $T$ is
\begin{eqnarray}
{\rm{Var}}(T)&=&\frac{2}{n^{2}}\left\{\sum\limits_{l=1}^{k}\frac{n_{l}(n-n_{l})^{2}}{n_{l}-1}{\rm{tr}}(\bm{\Sigma}_{l}^{2})
+\sum\limits_{l\neq{s}}^{k}n_{l}n_{s}{\rm{tr}}(\bm{\Sigma}_{l}\bm{\Sigma}_{s})\right\}+4\sum\limits_{l=1}^{k}n_{l}(\bm{\mu}_{l}-\widetilde{\bm{\mu}})\trans\bm{\Sigma}_{l}(\bm{\mu}_{l}-\widetilde{\bm{\mu}}) \nonumber \\
 &=& \sigma_T^2 + 4\sum\limits_{l=1}^{k}n_{l}(\bm{\mu}_{l}-\widetilde{\bm{\mu}})\trans\bm{\Sigma}_{l}(\bm{\mu}_{l}-\widetilde{\bm{\mu}})
 \label{eqn:VarT}
 \end{eqnarray}
where
%From ${\rm{Var}}(T)$,  the variance of $T$ under the $H_0$ denoted by $ \sigma_T^2$  is
\begin{align*}
\sigma_{T}^{2}:=\frac{2}{n^{2}}\left\{\sum\limits_{l=1}^{k}\frac{n_{l}(n-n_{l})^{2}}{n_{l}-1}{\rm{tr}}(\bm{\Sigma}_{l}^{2})
+\sum\limits_{l\neq{s}}^{k}n_{l}n_{s}{\rm{tr}}(\bm{\Sigma}_{l}\bm{\Sigma}_{s})\right\}.
\end{align*}
Note that ${\rm{Var}}(T) = \sigma_T^2$ under $H_0$. From (\CG{A1})
and (\CG{A2}), we have
\begin{align}
(\bm{\mu}_{l}-\widetilde{\bm{\mu}})\trans\bm{\Sigma}_{l}(\bm{\mu}_{l}-\widetilde{\bm{\mu}})=o\left\{n^{-1}{\rm{tr}}\left(\sum\limits_{l=1}^{k}\bm{\Sigma}_{l}\right)^{2}\right\}
\label{eqn:mu}
\end{align}
for $l=1,\ldots,k$ and  by combining ${\rm{Var}}(T)$ and \eqref{eqn:mu}, we obtain
\begin{align*}
{\rm{Var}}(T)=\sigma_{T}^{2}\{1+o(1)\}.
\end{align*}

In order to formulate a test procedure, we should give an
asymptotically ratio-consistent estimator of $\sigma_{T}$. There are
many different estimators proposed in existing studies.  We
adopt two different estimators which are stated in the following two lemmas.

The first one is based on
\cite{Aoshima:Yata:2011} which is given in Lemma \CG{3.1}.
It should
be noted that the requirements for obtaining asymptotically
ratio-consistent estimator of $\sigma_{T}$ in
\cite{Aoshima:Yata:2011} are different from (\CG{A1})-(\CG{A3}). Our
assumption on $\Sigma_{i}$'s in (\CG{A3}) is weaker than those
assumptions (\CG{A-iv} and \CG{A-v}) in \cite{Aoshima:Yata:2011}.

\begin{lemma}\label{lem3.1}
Suppose we have the following estimator
of $\sigma_T^2$
\begin{align*}
\widehat{\sigma_{T}}^{2}:=\frac{2}{n^{2}}\left\{\sum\limits_{l=1}^{k}\frac{n_{l}(n-n_{l})^{2}}{n_{l}-1}\widehat{{\rm{tr}}(\bm{\Sigma}_{l}^{2})}+\sum\limits_{l\neq{s}}^{k}n_{l}n_{s}\widehat{{\rm{tr}}(\bm{\Sigma}_{l}\bm{\Sigma}_{s})}\right\},
\end{align*}
then we have the ratio consistency of $\widehat{\sigma_T}^2$, i.e.,
\begin{align*}
\widehat{\sigma_{T}}/\sigma_{T}\overset{pr}\longrightarrow 1
\end{align*}
where
$\widehat{{\rm{tr}}(\bm{\Sigma}_{l}^{2})}:={\rm{tr}}(\bm{S}_{ln_{l1}}\bm{S}_{ln_{l2}})$
and
$\widehat{{\rm{tr}}(\bm{\Sigma}_{l}\bm{\Sigma}_{s})}:={\rm{tr}}(\bm{S}_{ln_{l}}\bm{S}_{sn_{s}})$
are asymptotically ratio-consistent estimators of
${\rm{tr}}(\bm{\Sigma}_{l}^2)$ and
${\rm{tr}}(\bm{\Sigma}_{l}\bm{\Sigma}_{s})$, respectively, for
$l\neq s$ and $l$, $s=1,\ldots,k$.
\end{lemma}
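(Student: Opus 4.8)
The plan is to establish the ratio consistency $\widehat{\sigma_T}/\sigma_T \overset{pr}\longrightarrow 1$ by first proving the ratio consistency of each component estimator, $\widehat{{\rm{tr}}(\bm{\Sigma}_l^2)}$ and $\widehat{{\rm{tr}}(\bm{\Sigma}_l\bm{\Sigma}_s)}$, and then combining these pieces through the explicit linear structure of $\sigma_T^2$. Since $\widehat{\sigma_T}^2$ is a fixed nonnegative-coefficient linear combination of the $\widehat{{\rm{tr}}(\bm{\Sigma}_l^2)}$ and $\widehat{{\rm{tr}}(\bm{\Sigma}_l\bm{\Sigma}_s)}$ with exactly the same coefficients that build $\sigma_T^2$ out of the ${\rm{tr}}(\bm{\Sigma}_l^2)$ and ${\rm{tr}}(\bm{\Sigma}_l\bm{\Sigma}_s)$, it suffices to show that each estimated trace is ratio-consistent for its target, i.e. $\widehat{{\rm{tr}}(\bm{\Sigma}_l^2)}/{\rm{tr}}(\bm{\Sigma}_l^2) \overset{pr}\longrightarrow 1$ and $\widehat{{\rm{tr}}(\bm{\Sigma}_l\bm{\Sigma}_s)}/{\rm{tr}}(\bm{\Sigma}_l\bm{\Sigma}_s)\overset{pr}\longrightarrow 1$; together with (A1), which guarantees the ratios $n_l/n$ stay bounded away from $0$ and $1$ so that no single term is asymptotically negligible relative to or dominant over the sum, this yields $\widehat{\sigma_T}^2/\sigma_T^2 \to 1$ in probability, and then the continuous mapping theorem (the square root) gives the claim.

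For the individual estimators I would proceed by a standard moment argument under the factor model \eqref{eq2}. First I would verify unbiasedness (or asymptotic unbiasedness): using $\bm{X}_{li}=\bm{\Gamma}_l\bm{Z}_{li}+\bm{\mu}_l$, the mean $\bm{\mu}_l$ cancels in the sample covariance $\bm{S}_{ln_{l1}}$ and in $\bm{S}_{ln_{l2}}$ (they are built from centered data), so that ${\rm{E}}\,{\rm{tr}}(\bm{S}_{ln_{l1}}\bm{S}_{ln_{l2}}) = {\rm{tr}}(\bm{\Sigma}_l^2)$ exactly; the key point is that $\bm{S}_{ln_{l1}}$ and $\bm{S}_{ln_{l2}}$ are built from disjoint halves of the sample and are therefore independent, each with expectation $\bm{\Sigma}_l$, so the expectation of the product of traces factorizes. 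Similarly $\bm{S}_{ln_l}$ and $\bm{S}_{sn_s}$ are independent for $l\neq s$ (different groups), giving ${\rm{E}}\,{\rm{tr}}(\bm{S}_{ln_l}\bm{S}_{sn_s}) = {\rm{tr}}(\bm{\Sigma}_l\bm{\Sigma}_s)$. Then I would bound the variance of each estimator and show it is $o\!\left({\rm{tr}}^2(\bm{\Sigma}_l^2)\right)$, respectively $o\!\left({\rm{tr}}^2(\bm{\Sigma}_l\bm{\Sigma}_s)\right)$; combined with Chebyshev's inequality this gives the ratio consistency.

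The main obstacle will be the variance computation. Expanding ${\rm{Var}}\big({\rm{tr}}(\bm{S}_{ln_{l1}}\bm{S}_{ln_{l2}})\big)$ in terms of the $z_{lij}$ produces a sum over many index configurations; after using independence across the two halves and the moment conditions ${\rm{E}}(z_{lij})=0$, ${\rm{Var}}(z_{lij})=1$, ${\rm{E}}(z_{lij}^4)=3+\gamma_l$, the surviving terms organize into expressions of the form ${\rm{tr}}(\bm{\Sigma}_l^4)$, $({\rm{tr}}(\bm{\Sigma}_l^2))^2/n_l$, a $\gamma_l$-weighted diagonal term dominated by ${\rm{tr}}(\bm{\Sigma}_l^4)$, and lower-order pieces, each divided by an appropriate power of $n_{l1}$ or $n_{l2}$. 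Under (A1) these sample-size factors are $\Theta(n)$, and (A3) with $l=s=i=j$ gives exactly ${\rm{tr}}(\bm{\Sigma}_l^4)=o\!\left({\rm{tr}}^2(\bm{\Sigma}_l^2)\right)$, so the variance is $o\!\left({\rm{tr}}^2(\bm{\Sigma}_l^2)\right)$ as required; the cross-term estimator is handled identically using ${\rm{tr}}(\bm{\Sigma}_l^2\bm{\Sigma}_s^2)\le$ (by Cauchy--Schwarz and (A3)) $o\!\left({\rm{tr}}^2(\bm{\Sigma}_l\bm{\Sigma}_s)\right)$. The bookkeeping of which index patterns survive and the verification that every one of them is controlled by (A1) and (A3) is the routine-but-delicate core of the argument; everything else is assembly.
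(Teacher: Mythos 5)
Your overall route is the same as the paper's: exact unbiasedness of ${\rm tr}(\bm{S}_{ln_{l1}}\bm{S}_{ln_{l2}})$ and ${\rm tr}(\bm{S}_{ln_l}\bm{S}_{sn_s})$ via independence of the two half-samples (resp.\ the two groups), a variance bound plus Chebyshev for each trace estimator, and then assembly of the finitely many ratio-consistent pieces through the common linear structure of $\sigma_T^2$ (the paper organizes the variance computation by conditioning on $\bm{U}=\sum_{j>n_{l1}}(\bm{X}_{lj}-\overline{\bm{X}}_{ln_{l2}})(\bm{X}_{lj}-\overline{\bm{X}}_{ln_{l2}})\trans$ rather than by raw expansion in the $z_{lij}$, but that is a bookkeeping difference, not a different argument).

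There is, however, one concrete flaw in how you justify the key variance step. You assert that (A3) with $l=s=i=j$ ``gives exactly ${\rm tr}(\bm{\Sigma}_l^4)=o\{{\rm tr}^2(\bm{\Sigma}_l^2)\}$,'' and analogously that Cauchy--Schwarz and (A3) give ${\rm tr}(\bm{\Sigma}_l^2\bm{\Sigma}_s^2)=o\{{\rm tr}^2(\bm{\Sigma}_l\bm{\Sigma}_s)\}$. That is a misreading: the right-hand side of (A3) is the \emph{global} quantity ${\rm tr}^2\{(\sum_{i=1}^k\bm{\Sigma}_i)^2\}$, not the individual ${\rm tr}^2(\bm{\Sigma}_l^2)$ or ${\rm tr}^2(\bm{\Sigma}_l\bm{\Sigma}_s)$, and the two can be of completely different orders when one group's covariance is negligible relative to the others; moreover the inequality ${\rm tr}(\bm{A}^2\bm{B}^2)\le{\rm tr}^2(\bm{A}\bm{B})$ is false in general for positive semidefinite $\bm{A},\bm{B}$ (the term that actually appears in the cross variance is ${\rm tr}\{(\bm{\Sigma}_l\bm{\Sigma}_s)^2\}$, which does satisfy ${\rm tr}\{(\bm{\Sigma}_l\bm{\Sigma}_s)^2\}\le{\rm tr}^2(\bm{\Sigma}_l\bm{\Sigma}_s)$). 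The step you need is true but must be justified differently: either note that every surviving term in the variance carries at least one factor $n_{l1}^{-1}$ or $n_{l2}^{-1}$ and use the eigenvalue inequality ${\rm tr}(\bm{\Sigma}_l^4)\le{\rm tr}^2(\bm{\Sigma}_l^2)$ (so each individual estimator is ratio consistent essentially without invoking (A3)), or, as the paper effectively does, bound all variance contributions by $O(n^{-1}){\rm tr}^2\{(\sum_i\bm{\Sigma}_i)^2\}$, which under (A1) is $o(\sigma_T^4)$ and already suffices for $\widehat{\sigma_T}/\sigma_T\overset{pr}\longrightarrow 1$. With one of these repairs your argument goes through; as written, the appeal to (A3) does not support the bound you claim.
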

\begin{proof} See Appendix.
\end{proof}

\bigskip
The other estimator of $\sigma_T$
is the estimator used in \cite{Bai:Saranadasa:1996} and \cite{Hu:Bai:2015}
which is stated in the following lemma.

\begin{lemma}\label{lem3.2}
(\cite{Hu:Bai:2015}) Suppose
\begin{align*}
\widetilde{\sigma_{T}}^{2}:=\frac{2}{n^{2}}\left\{\sum\limits_{l=1}^{k}\frac{n_{l}(n-n_{l})^{2}}{n_{l}-1}\widetilde{{\rm{tr}}(\bm{\Sigma}_{l}^{2})}+\sum\limits_{l\neq{s}}^{k}n_{l}n_{s}\widehat{{\rm{tr}}(\bm{\Sigma}_{l}\bm{\Sigma}_{s})}\right\},
\end{align*}
then
\begin{align*}
\widetilde{\sigma_{T}}/\sigma_{T}\overset{pr}\longrightarrow 1
\end{align*}
where
\begin{align*}
\widetilde{{\rm{tr}}(\bm{\Sigma}_{l}^{2})}=\frac{(n_{l}-1)^2}{(n_{l}+1)(n_{l}-2)}\left\{{\rm{tr}}(\bm{S}_{ln_{l}}^{2})-\frac{1}{n_{l}-1}{\rm{tr}}^{2}(\bm{S}_{ln_{l}})\right\}.
\end{align*}
\end{lemma}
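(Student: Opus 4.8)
The plan is to reduce the ratio consistency of $\widetilde{\sigma_{T}}$ to that of its component estimators and then invoke the (known) moment computations for those. Under (A1) the weights converge, $n_{l}(n-n_{l})^{2}/\{n^{2}(n_{l}-1)\}\to(1-\lambda_{l})^{2}>0$ and $n_{l}n_{s}/n^{2}\to\lambda_{l}\lambda_{s}>0$, so that
\begin{equation*}
\sigma_{T}^{2}=2\left\{\sum\limits_{l=1}^{k}(1-\lambda_{l})^{2}{\rm{tr}}(\bm{\Sigma}_{l}^{2})+\sum\limits_{l\neq s}^{k}\lambda_{l}\lambda_{s}{\rm{tr}}(\bm{\Sigma}_{l}\bm{\Sigma}_{s})\right\}\{1+o(1)\}.
\end{equation*}
Since ${\rm{tr}}(\bm{\Sigma}_{l}\bm{\Sigma}_{s})={\rm{tr}}(\bm{\Sigma}_{l}^{1/2}\bm{\Sigma}_{s}\bm{\Sigma}_{l}^{1/2})\geq 0$ for all $l,s$, the bracketed sum lies between fixed positive multiples of ${\rm{tr}}\big((\sum_{l=1}^{k}\bm{\Sigma}_{l})^{2}\big)=\sum_{l,s}{\rm{tr}}(\bm{\Sigma}_{l}\bm{\Sigma}_{s})$; hence $\sigma_{T}^{2}\asymp{\rm{tr}}\big((\sum_{l}\bm{\Sigma}_{l})^{2}\big)$ and it suffices to prove $\widetilde{\sigma_{T}}^{2}-\sigma_{T}^{2}=o_{p}\big({\rm{tr}}((\sum_{l}\bm{\Sigma}_{l})^{2})\big)$. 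Writing the difference out, it is a linear combination, with coefficients $2n_{l}(n-n_{l})^{2}/\{n^{2}(n_{l}-1)\}=O(1)$ and $2n_{l}n_{s}/n^{2}=O(1)$, of the errors $\widetilde{{\rm{tr}}(\bm{\Sigma}_{l}^{2})}-{\rm{tr}}(\bm{\Sigma}_{l}^{2})$ and $\widehat{{\rm{tr}}(\bm{\Sigma}_{l}\bm{\Sigma}_{s})}-{\rm{tr}}(\bm{\Sigma}_{l}\bm{\Sigma}_{s})$, and since $k$ is fixed it is enough to show each of these is $o_{p}\big({\rm{tr}}((\sum_{i}\bm{\Sigma}_{i})^{2})\big)$.

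For the cross terms nothing new is needed: $\widehat{{\rm{tr}}(\bm{\Sigma}_{l}\bm{\Sigma}_{s})}={\rm{tr}}(\bm{S}_{ln_{l}}\bm{S}_{sn_{s}})$ is unbiased because groups $l$ and $s$ are independent and each $\bm{S}$ is an unbiased estimator of its population covariance, and conditioning alternately on the two groups shows ${\rm{Var}}({\rm{tr}}(\bm{S}_{ln_{l}}\bm{S}_{sn_{s}}))$ is a sum of terms of order $n_{l}^{-1}$, $n_{s}^{-1}$ or $(n_{l}n_{s})^{-1}$ times fourth-order traces such as ${\rm{tr}}(\bm{\Sigma}_{l}\bm{\Sigma}_{s}\bm{\Sigma}_{l}\bm{\Sigma}_{s})$, which (A3) forces to be $o\big({\rm{tr}}^{2}((\sum_{i}\bm{\Sigma}_{i})^{2})\big)$; Chebyshev's inequality then gives the required rate, exactly as in the proof of Lemma~\ref{lem3.1}. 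The only genuinely new ingredient is the Bai--Saranadasa diagonal estimator $\widetilde{{\rm{tr}}(\bm{\Sigma}_{l}^{2})}$. Here I would use the factor model \eqref{eq2} to compute ${\rm{E}}[{\rm{tr}}(\bm{S}_{ln_{l}}^{2})]$ and ${\rm{E}}[{\rm{tr}}^{2}(\bm{S}_{ln_{l}})]$ explicitly in terms of ${\rm{tr}}(\bm{\Sigma}_{l}^{2})$, ${\rm{tr}}^{2}(\bm{\Sigma}_{l})$ and a fourth-cumulant contribution proportional to $\gamma_{l}\sum_{j}(\bm{\Sigma}_{l})_{jj}^{2}$; the constants $(n_{l}-1)^{2}/\{(n_{l}+1)(n_{l}-2)\}$ and $1/(n_{l}-1)$ are chosen precisely so that the leading pieces cancel, leaving ${\rm{E}}[\widetilde{{\rm{tr}}(\bm{\Sigma}_{l}^{2})}]={\rm{tr}}(\bm{\Sigma}_{l}^{2})+O(n_{l}^{-1}{\rm{tr}}(\bm{\Sigma}_{l}^{2}))$, a bias that is negligible after dividing by ${\rm{tr}}((\sum_{i}\bm{\Sigma}_{i})^{2})$. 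I would then bound ${\rm{Var}}[\widetilde{{\rm{tr}}(\bm{\Sigma}_{l}^{2})}]$ by a sum of terms of orders $n_{l}^{-1}{\rm{tr}}(\bm{\Sigma}_{l}^{4})$, $n_{l}^{-2}{\rm{tr}}^{2}(\bm{\Sigma}_{l}^{2})$ and their $\gamma_{l}$-analogues; the first is $o\big({\rm{tr}}^{2}((\sum_{i}\bm{\Sigma}_{i})^{2})\big)$ by (A3) with $l=s=i=j$, and the second because $n_{l}\to\infty$ while $0\le{\rm{tr}}(\bm{\Sigma}_{l}^{2})\le{\rm{tr}}((\sum_{i}\bm{\Sigma}_{i})^{2})$. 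A final application of Chebyshev's inequality together with Slutsky's theorem yields $\widetilde{\sigma_{T}}^{2}/\sigma_{T}^{2}\overset{pr}\longrightarrow 1$, hence $\widetilde{\sigma_{T}}/\sigma_{T}\overset{pr}\longrightarrow 1$.

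The main obstacle is the variance bound for the quartic forms ${\rm{tr}}(\bm{S}_{ln_{l}}^{2})$ and ${\rm{tr}}^{2}(\bm{S}_{ln_{l}})$: expanding each as a sum over the i.i.d.\ vectors $\bm{Z}_{li}$ and organizing the numerous index-coincidence cases is bookkeeping-heavy, and one must track the kurtosis parameter $\gamma_{l}$ carefully since it enters only through the diagonal contributions $\sum_{j}(\bm{\Sigma}_{l})_{jj}^{2}$, which are themselves dominated by ${\rm{tr}}(\bm{\Sigma}_{l}^{2})$ and, after the accompanying factor $n_{l}^{-1}$, by ${\rm{tr}}(\bm{\Sigma}_{l}^{4})$. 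All stray terms are therefore controlled either directly by (A3) or by an extra $n_{l}^{-1}$ factor, so no assumption beyond (A1)--(A3) is needed; since this is precisely the computation carried out in \cite{Bai:Saranadasa:1996} and \cite{Hu:Bai:2015}, in the write-up I would either reproduce it or cite it and merely verify that (A1)--(A3) suffice.
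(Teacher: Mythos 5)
There is nothing in the paper for your argument to be checked against: Lemma 3.2 is imported verbatim from \cite{Hu:Bai:2015} (note the citation in its statement), and unlike Lemma 3.1 it carries no ``See Appendix'' -- the paper's ``proof'' is the citation itself. Your outline essentially reconstructs the argument that this citation stands in for, and it is sound: under (A1) the coefficients $n_l(n-n_l)^2/\{n^2(n_l-1)\}$ and $n_ln_s/n^2$ are bounded away from $0$ and $\infty$, so $\sigma_T^2\asymp{\rm tr}\bigl\{(\sum_l\bm{\Sigma}_l)^2\bigr\}$ and ratio consistency reduces to showing each component error is $o_p\bigl({\rm tr}\{(\sum_i\bm{\Sigma}_i)^2\}\bigr)$; the cross terms ${\rm tr}(\bm{S}_{ln_l}\bm{S}_{sn_s})$ are handled exactly as in the paper's Appendix proof of Lemma 3.1 (independence across groups, variance bounded by fourth-order traces killed by (A3), Chebyshev); and the only new object is the Bai--Saranadasa estimator, for which the mean/variance computation under model \eqref{eq2} gives bias and variance terms of the form $n_l^{-1}{\rm tr}(\bm{\Sigma}_l^2)$, $n_l^{-1}{\rm tr}(\bm{\Sigma}_l^4)$, $n_l^{-2}{\rm tr}^2(\bm{\Sigma}_l^2)$ plus kurtosis terms bounded via ${\rm tr}\{A\,{\rm diag}(A)\}\le{\rm tr}(A^2)$ (the same device as display \eqref{eq16}), all negligible under (A1) and (A3); Chebyshev and Slutsky then finish. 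The one thing to be clear about is that the genuinely laborious step -- the fourth-moment bookkeeping for ${\rm Var}\{{\rm tr}(\bm{S}_{ln_l}^2)\}$ and ${\rm Var}\{{\rm tr}^2(\bm{S}_{ln_l})\}$ under the factor model -- is deferred to \cite{Bai:Saranadasa:1996} and \cite{Hu:Bai:2015} rather than carried out; that mirrors exactly what the paper does by citing, so it is acceptable, but a self-contained write-up would require reproducing that computation and verifying, as you indicate, that (A1)--(A3) imply the conditions under which \cite{Hu:Bai:2015} establish the result (their assumptions are of the same form, so this check is routine). Your passing remark that the diagonal kurtosis contributions are dominated ``after the accompanying factor $n_l^{-1}$, by ${\rm tr}(\bm{\Sigma}_l^4)$'' is not literally correct as an inequality, but it is harmless: each such term carries an explicit $n_l^{-1}$ or $n_l^{-2}$ and is bounded by ${\rm tr}(\bm{\Sigma}_l^4)$ or ${\rm tr}^2(\bm{\Sigma}_l^2)$, either of which is negligible after dividing by ${\rm tr}^2\bigl\{(\sum_i\bm{\Sigma}_i)^2\bigr\}$.
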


On the basis of Lemmas \CG{3.1} and \CG{3.2}, we propose two test
statistics which are
\begin{eqnarray}
\widehat{T}_{1}:=\frac{T}{\widehat{\sigma_{T}}}~~\mbox{and}~~
\widehat{T}_{2}:=\frac{T}{\widetilde{\sigma_{T}}}.
\label{eqn:proposed}
\end{eqnarray}

In the following section, we prove the asymptotic normality of the
proposed tests in \eqref{eqn:proposed} and their asymptotic power
functions.

\subsection{Asymptotic distributions of the proposed test statistic}
The following theorems establish the asymptotic normality of the new
test statistic \eqref{eqn:proposed} under the $H_0$ and their power
function under the $H_1$, when data dimension $p$ and data size $n$
increase to infinity.

\begin{theorem}\label{theo3.1}
Under (\CG{A1}), (\CG{A3}) and $H_{0}$, as $n, p\to\infty$,
$P(\widehat{T}\geq\xi_{\alpha})=\alpha+o(1)$, where $\xi_{\alpha}$
is the upper $\alpha$ quantile of standard normal distribution where
$\widehat{T}$ is either $\widehat{T}_1$ or $\widehat{T}_2$ in
\eqref{eqn:proposed}.
\end{theorem}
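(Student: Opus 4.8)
The plan is to establish asymptotic normality of the numerator $T$ under $H_0$ first, and then invoke Slutsky's theorem together with the ratio-consistency of the variance estimators from Lemmas~\ref{lem3.1} and~\ref{lem3.2} to transfer the conclusion to $\widehat{T}_1$ and $\widehat{T}_2$. Under $H_0$ we have $\bm{\mu}_1=\cdots=\bm{\mu}_k$, so $\mathrm{E}(T)=0$ by \eqref{eqn:ET} and $\mathrm{Var}(T)=\sigma_T^2$ by \eqref{eqn:VarT}. After centering each sample ($\bm{X}_{li}\to\bm{\Gamma}_l\bm{Z}_{li}$, which under $H_0$ leaves $T$ unchanged in distribution up to the common mean cancelling), $T$ becomes a quadratic form in independent mean-zero vectors, namely a sum of terms of the form $\bm{Z}_{li}\trans\bm{\Gamma}_l\trans\bm{\Gamma}_s\bm{Z}_{sj}$ over appropriate index pairs. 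First I would write $T = \sum_{m} D_m$ where the summands are ordered according to a filtration that makes the partial sums a martingale — the standard device is to list all pooled observations $\bm{X}_{li}$ in some fixed order and let $D_m$ collect all cross-product contributions to $T$ involving the $m$th observation and observations appearing earlier. This makes $\{D_m,\mathcal{F}_m\}$ a martingale difference array with respect to the natural filtration $\mathcal{F}_m=\sigma(\text{first }m\text{ observations})$.

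Next I would apply the martingale central limit theorem (e.g. the Hall–Heyde version), which requires two conditions: (i) the conditional Lindeberg condition, and (ii) the conditional variance condition $\sum_m \mathrm{E}(D_m^2\mid\mathcal{F}_{m-1})/\sigma_T^2 \overset{pr}\longrightarrow 1$. For (ii) one computes $\mathrm{E}(D_m^2\mid\mathcal{F}_{m-1})$ explicitly — it is a quadratic form in the earlier $\bm{Z}$'s with coefficient matrices built from the $\bm{\Gamma}_l\trans\bm{\Gamma}_s$ products — shows its expectation sums to $\sigma_T^2$, and shows its variance is $o(\sigma_T^4)$; the variance bound is exactly where $\mathrm{tr}(\bm{\Sigma}_l\bm{\Sigma}_s\bm{\Sigma}_i\bm{\Sigma}_j)$ terms appear, and (\textcolor{magenta}{A3}) together with (\textcolor{magenta}{A1}) forces these to be negligible relative to $\sigma_T^4 \asymp n^{-2}\,\mathrm{tr}^2\!\big(\sum_i\bm{\Sigma}_i\big)^2$ up to constants depending on the $\lambda_l$. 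Condition (i) follows from a fourth-moment bound on the $D_m$: one shows $\sum_m \mathrm{E}(D_m^4) = o(\sigma_T^4)$, again reducing to traces of products of four covariance matrices controlled by (\textcolor{magenta}{A3}), while the fourth-moment assumption $\mathrm{E}(z_{lij}^4)=3+\gamma_l<\infty$ in the factor model \eqref{eq2} keeps the contribution of the kurtosis terms bounded. This yields $T/\sigma_T\overset{d}\longrightarrow N(0,1)$.

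Finally, by Lemma~\ref{lem3.1} we have $\widehat{\sigma_T}/\sigma_T\overset{pr}\longrightarrow 1$ and by Lemma~\ref{lem3.2} (valid under $H_0$ where the $\widetilde{\mathrm{tr}(\bm{\Sigma}_l^2)}$ estimators are consistent without any mean-correction issue) $\widetilde{\sigma_T}/\sigma_T\overset{pr}\longrightarrow 1$, so Slutsky gives $\widehat{T}_1 = T/\widehat{\sigma_T}\overset{d}\longrightarrow N(0,1)$ and likewise for $\widehat{T}_2$. Consequently $P(\widehat{T}\ge\xi_\alpha)\to 1-\Phi(\xi_\alpha)=\alpha$, which is the claim. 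I expect the main obstacle to be the bookkeeping in the conditional-variance computation for (ii): one must group the many cross-terms in $D_m$ correctly, identify which products of $\bm{\Sigma}_l$'s arise, and verify carefully that every such product is of order $o\big(\mathrm{tr}^2(\sum_i\bm{\Sigma}_i)^2\big)$ under (\textcolor{magenta}{A3}) — in particular checking that the coefficients $n(n-n_l)/\{n(n_l-1)\}$ and $n_ln_s/n$ do not spoil the rate, which is where (\textcolor{magenta}{A1}) is used to replace all $n_l$ by $\lambda_l n$ up to $1+o(1)$ factors. The Lindeberg/fourth-moment step is conceptually similar but less delicate since only an upper bound, not an exact asymptotic, is needed.
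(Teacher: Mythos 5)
Your proposal is correct and follows essentially the same route as the paper: the paper orders the pooled observations, forms the martingale differences $D_j=\sum_{i<j}\eta_{ij}$ with filtration $\mathscr{C}_j$ (Lemma A.1), verifies the conditional variance convergence to the appropriate multiple of $\sigma_T^2$ and the conditional Lindeberg condition via a fourth-moment bound using (A1) and (A3) (Lemmas A.2 and A.3), and then applies the martingale CLT plus Slutsky with the ratio-consistency of $\widehat{\sigma_T}$ and $\widetilde{\sigma_T}$ from Lemmas 3.1 and 3.2, exactly as you outline. The only differences are bookkeeping conventions (e.g.\ whether the factor $2$ is absorbed into $D_j$, so the paper's conditional variances converge to $\sigma_T^2/4$ rather than $\sigma_T^2$), which do not affect correctness.
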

\begin{proof} See Appendix. \end{proof}

The following theorem shows the asymptotic power function of the
proposed test.
\begin{theorem}\label{theo3.2}
Under (\CG{A1})-(\CG{A3}) as $n, p\to\infty$,
the asymptotic power function of $\widehat{T}$ ($\widehat{T}_1$ or $\widehat{T}_2$) is
\begin{align}\label{eq3}
P(\widehat{T}\geq\xi_{\alpha})=\Phi\left\{-\xi_{\alpha}+\frac{\dfrac{\sqrt{2}}{2}n\sum\limits_{l=1}^{k}\lambda_{l}(\bm{\mu}_{l}-\widetilde{\bm{\mu}})^{\tiny{\rm{T}}}(\bm{\mu}_{l}-\widetilde{\bm{\mu}})}{\sqrt{\sum\limits_{l=1}^{k}(1-\lambda_{l})^{2}{\rm{tr}}(\bm{\Sigma}_{l}^{2})+\sum\limits_{l\neq{s}}^{k}\lambda_{l}\lambda_{s}{\rm{tr}}(\bm{\Sigma}_{l}\bm{\Sigma}_{s})}}\right\}+o(1),
\end{align}
where
$\widetilde{\bm{\mu}}=\sum\limits_{l=1}^{k}\lambda_{l}\bm{\mu}_{l}$
and $\Phi(\cdot)$ is the standard normal cumulative distribution
function.
\end{theorem}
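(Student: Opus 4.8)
The plan is to reduce the statement to the central limit theorem already established for Theorem~\ref{theo3.1}, combined with the ratio consistency of the variance estimators in Lemmas~\ref{lem3.1} and~\ref{lem3.2}, and then to evaluate the resulting non-centrality parameter deterministically with the help of (A1).

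First I would decompose $T$ under $H_{1}$. Writing $\bm{Y}_{li}=\bm{\Gamma}_{l}\bm{Z}_{li}=\bm{X}_{li}-\bm{\mu}_{l}$ and substituting into \eqref{eqn:T} gives $T=T_{0}+L+{\rm E}(T)$, where $T_{0}$ is the part built only from the centred vectors $\bm{Y}_{li}$, $L$ is the part that is linear in the $\bm{Y}_{li}$ and carries the shifts $\bm{\mu}_{l}-\widetilde{\bm{\mu}}$, and the constant term is ${\rm E}(T)$ from \eqref{eqn:ET}. Here ${\rm E}(L)=0$, and $T_{0}$ and $L$ are uncorrelated because, by independence and ${\rm E}(\bm{Z}_{li})=0$, every cross term contains a solitary mean-zero factor; comparing with \eqref{eqn:VarT} then forces ${\rm Var}(L)=4\sum_{l=1}^{k}n_{l}(\bm{\mu}_{l}-\widetilde{\bm{\mu}})\trans\bm{\Sigma}_{l}(\bm{\mu}_{l}-\widetilde{\bm{\mu}})$, which by \eqref{eqn:mu} is $o(\sigma_{T}^{2})$, so $L/\sigma_{T}\overset{pr}\longrightarrow 0$ by Markov's inequality. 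For $T_{0}/\sigma_{T}$ I would quote the martingale central limit theorem argument from the proof of Theorem~\ref{theo3.1} — which uses only (A1) and (A3) and applies to $T_{0}$ word for word, since $T_{0}$ is exactly the statistic $T$ evaluated under $H_{0}$ — obtaining $T_{0}/\sigma_{T}\overset{d}\longrightarrow N(0,1)$; combining, $(T-{\rm E}(T))/\sigma_{T}\overset{d}\longrightarrow N(0,1)$. By Lemmas~\ref{lem3.1} and~\ref{lem3.2} and Slutsky's theorem the same limit holds with $\sigma_{T}$ replaced by $\widehat{\sigma_{T}}$ or $\widetilde{\sigma_{T}}$; set $W_{n}=(T-{\rm E}(T))/\widehat{\sigma_{T}}$ for whichever estimator is in use.

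Next I would simplify the two deterministic quantities using (A1). From \eqref{eqn:ET}, ${\rm E}(T)=n\sum_{l=1}^{k}(n_{l}/n)(\bm{\mu}_{l}-\widetilde{\bm{\mu}})\trans(\bm{\mu}_{l}-\widetilde{\bm{\mu}})$, and in the definition of $\sigma_{T}^{2}$ one may replace $n_{l}/n$ by $\lambda_{l}$, $(n-n_{l})/n$ by $1-\lambda_{l}$ and $n_{l}/(n_{l}-1)$ by $1$, giving $\sigma_{T}^{2}=2\big\{\sum_{l}(1-\lambda_{l})^{2}{\rm tr}(\bm{\Sigma}_{l}^{2})+\sum_{l\neq s}\lambda_{l}\lambda_{s}{\rm tr}(\bm{\Sigma}_{l}\bm{\Sigma}_{s})\big\}\{1+o(1)\}$. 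Dividing, and using $1/\sqrt{2}=\sqrt{2}/2$, one finds ${\rm E}(T)/\sigma_{T}=b_{n}\{1+o(1)\}$, where $b_{n}$ is precisely the fraction inside the brace of \eqref{eq3}. Finally $P(\widehat{T}\geq\xi_{\alpha})=P\big(W_{n}\geq\xi_{\alpha}-{\rm E}(T)/\widehat{\sigma_{T}}\big)$ with ${\rm E}(T)/\widehat{\sigma_{T}}=b_{n}\{1+o_{p}(1)\}$ by the ratio consistency, and since $W_{n}\overset{d}\longrightarrow N(0,1)$ a converging-together (Slutsky-type) argument yields $P(\widehat{T}\geq\xi_{\alpha})=\Phi(-\xi_{\alpha}+b_{n})+o(1)$, which is \eqref{eq3}.

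The substitution producing $T=T_{0}+L+{\rm E}(T)$ and the asymptotic simplification of $\sigma_{T}^{2}$ are routine bookkeeping, and the only genuinely hard analytic input — the martingale CLT for $T_{0}/\sigma_{T}$ — is inherited from Theorem~\ref{theo3.1} rather than re-derived. The step that needs real care is the last one: the non-centrality $b_{n}$ need not stay bounded, so the $o(1)$ in \eqref{eq3} has to be obtained by treating the cases $b_{n}=O(1)$ and $b_{n}\to\infty$ separately along subsequences — in the second case ${\rm E}(T)/\widehat{\sigma_{T}}\to+\infty$ in probability while $W_{n}=O_{p}(1)$, so $P(\widehat{T}\geq\xi_{\alpha})\to1=\Phi(+\infty)$ — or equivalently by invoking P\'olya's theorem on the uniform convergence of the distribution function of $W_{n}$; one must also keep in mind that the multiplicative error $b_{n}\,o_{p}(1)$ coming from $\widehat{\sigma_{T}}$ is itself $o_{p}(1)$ only when $b_{n}$ is bounded.
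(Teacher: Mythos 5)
Your proposal is correct and follows essentially the same route as the paper's own proof: the martingale CLT underlying Theorem 3.1 yields $(T-{\rm E}(T))/\sigma_{T}\overset{d}\longrightarrow N(0,1)$, Lemmas 3.1--3.2 plus Slutsky transfer this to $\widehat{T}_1$ and $\widehat{T}_2$, and ${\rm E}(T)/\sigma_{T}$ is identified under (A1)--(A2) with the non-centrality term in \eqref{eq3}. Your explicit decomposition $T=T_{0}+L+{\rm E}(T)$ with ${\rm Var}(L)=o(\sigma_{T}^{2})$ via \eqref{eqn:mu}, and your subsequence/P\'olya treatment of a possibly unbounded non-centrality, simply make rigorous the steps the paper leaves implicit (its lemmas are proved assuming zero means and the final limit is asserted directly).
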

\begin{proof} See Appendix.
\end{proof}

\bigskip

Since MANOVA is a special situation of the $k$ sample BF problem, we
have the following two corollaries which are immediate results from
Theorems \CG{3.1} and \CG{3.2}.

\begin{corollary}
If $\bm{\Sigma}_{1}=\cdots=\bm{\Sigma}_{k}$ and assumptions
(\CG{A1}) and (\CG{A3}) hold, under the null hypothesis $H_{0}$, as
$n,p\to\infty$, we get
$P(\widehat{T}\geq\xi_{\alpha})=\alpha+o(1)$.
\end{corollary}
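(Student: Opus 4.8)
The plan is to deduce the corollary directly from Theorem~\CG{3.1} by checking that its hypotheses are all satisfied in the homogeneous case. First I would observe that the homogeneity assumption $\bm{\Sigma}_{1}=\cdots=\bm{\Sigma}_{k}=\bm{\Sigma}$ is merely a special instance of the general model in \eqref{eq2}: one takes $\bm{\Gamma}_{1}=\cdots=\bm{\Gamma}_{k}=\bm{\Gamma}$ with $\bm{\Gamma}\bm{\Gamma}\trans=\bm{\Sigma}$, while the moment conditions on the $\bm{Z}_{li}$ are unchanged. Hence no structural hypothesis of Theorem~\CG{3.1} is violated by imposing homogeneity.

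Next I would verify that assumption (\CG{A3}) is meaningful and is exactly the condition that survives. Under $\bm{\Sigma}_{l}=\bm{\Sigma}$ for all $l$, every term ${\rm{tr}}(\bm{\Sigma}_{l}\bm{\Sigma}_{s}\bm{\Sigma}_{i}\bm{\Sigma}_{j})$ reduces to ${\rm{tr}}(\bm{\Sigma}^{4})$ and ${\rm{tr}}\left(\sum_{i=1}^{k}\bm{\Sigma}_{i}\right)^{2}=k^{2}{\rm{tr}}(\bm{\Sigma}^{2})$, so (\CG{A3}) becomes simply ${\rm{tr}}(\bm{\Sigma}^{4})=o\left({\rm{tr}}^{2}(\bm{\Sigma}^{2})\right)$, which is precisely the trace condition assumed in the corollary's statement (and, as the excerpt already notes, is implied by (\CG{A6})). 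Assumption (\CG{A1}) is assumed verbatim. Therefore all the hypotheses of Theorem~\CG{3.1} hold.

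It remains to note that the statistic $\widehat{T}$ (either $\widehat{T}_{1}$ or $\widehat{T}_{2}$) and the normalizers $\widehat{\sigma_{T}}$, $\widetilde{\sigma_{T}}$ are defined through $T$, $\bm{S}_{ln_{l}}$ and the split-sample covariances without any reference to homogeneity, so they are well defined here and Lemmas~\CG{3.1} and \CG{3.2} continue to give $\widehat{\sigma_{T}}/\sigma_{T}\overset{pr}\longrightarrow 1$ and $\widetilde{\sigma_{T}}/\sigma_{T}\overset{pr}\longrightarrow 1$. Applying Theorem~\CG{3.1} then yields $P(\widehat{T}\geq\xi_{\alpha})=\alpha+o(1)$ as $n,p\to\infty$, which is the claim.

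I do not anticipate any genuine obstacle: the corollary is a specialization, not a new result, so the only thing to be careful about is the bookkeeping in the previous paragraph — confirming that homogeneity plus the stated trace condition implies (\CG{A3}), and that homogeneity is consistent with the factor model \eqref{eq2}. Once those two checks are in place the proof is immediate from Theorem~\CG{3.1}.
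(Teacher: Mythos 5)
Your proposal is correct and matches the paper's treatment: the paper gives no separate proof, stating the corollary as an immediate specialization of Theorem \CG{3.1} to the case $\bm{\Sigma}_{1}=\cdots=\bm{\Sigma}_{k}$, which is exactly the argument you spell out. Your additional bookkeeping (homogeneity fits the factor model, and (\CG{A3}) reduces to ${\rm tr}(\bm{\Sigma}^{4})=o\{{\rm tr}^{2}(\bm{\Sigma}^{2})\}$) is accurate and simply makes explicit what the paper leaves implicit.
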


\begin{corollary}
Suppose $\bm{\Sigma}_{1}=\cdots=\bm{\Sigma}_{k}$ and assumptions
(\CG{A1}) and (\CG{A3}) hold. Under the local alternative (\CG{A2}),
as $n,p\to\infty$, we have
\begin{align*}
P(\widehat{T}\geq\xi_{\alpha})=\Phi\left\{-\xi_{\alpha}+\frac{n\sum\limits_{l=1}^{k}\lambda_{l}
(\bm{\mu}_{l}-\widetilde{\bm{\mu}})^{\tiny{\rm{T}}}(\bm{\mu}_{l}-\widetilde{\bm{\mu}})}{\sqrt{2(k-1){\rm{tr}}(\bm{\Sigma}_{1}^{2})}}\right\}+o(1).
\end{align*}
\end{corollary}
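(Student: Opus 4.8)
The plan is to obtain this corollary directly from Theorem \CG{3.2} by specializing the covariance structure and then simplifying; no new probabilistic argument is needed. First I would observe that the hypotheses of Theorem \CG{3.2} — namely (\CG{A1})--(\CG{A3}) together with the local alternative — are exactly what is assumed here, once (\CG{A3}) is read in its homogeneous form: with $\bm{\Sigma}_{1}=\cdots=\bm{\Sigma}_{k}=\bm{\Sigma}$ the quantity ${\rm{tr}}(\bm{\Sigma}_{l}\bm{\Sigma}_{s}\bm{\Sigma}_{i}\bm{\Sigma}_{j})$ collapses to ${\rm{tr}}(\bm{\Sigma}^{4})$ and (\CG{A3}) becomes ${\rm{tr}}(\bm{\Sigma}^{4})=o({\rm{tr}}^{2}(\bm{\Sigma}^{2}))$, so Theorem \CG{3.2} applies verbatim and we may start from the power expansion \eqref{eq3} (with $\widehat{T}$ being either $\widehat{T}_{1}$ or $\widehat{T}_{2}$).

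Second, I would substitute $\bm{\Sigma}_{l}=\bm{\Sigma}$ into the denominator of \eqref{eq3}. Every term ${\rm{tr}}(\bm{\Sigma}_{l}^{2})$ and every cross term ${\rm{tr}}(\bm{\Sigma}_{l}\bm{\Sigma}_{s})$ equals ${\rm{tr}}(\bm{\Sigma}^{2})$, so the radicand factors as ${\rm{tr}}(\bm{\Sigma}^{2})\big[\sum_{l=1}^{k}(1-\lambda_{l})^{2}+\sum_{l\neq s}\lambda_{l}\lambda_{s}\big]$. The main (and essentially only) computation is to show the bracket equals $k-1$: expanding $\sum_{l}(1-\lambda_{l})^{2}=k-2\sum_{l}\lambda_{l}+\sum_{l}\lambda_{l}^{2}$ and $\sum_{l\neq s}\lambda_{l}\lambda_{s}=\big(\sum_{l}\lambda_{l}\big)^{2}-\sum_{l}\lambda_{l}^{2}$, and using $\sum_{l=1}^{k}\lambda_{l}=1$ from (\CG{A1}), the $\sum_{l}\lambda_{l}^{2}$ terms cancel and what remains is $k-2+1=k-1$.

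Finally I would fold the constant $\frac{\sqrt{2}}{2}$ from the numerator of \eqref{eq3} into the denominator: since $\frac{\sqrt{2}}{2}=1/\sqrt{2}$, the argument of $\Phi$ becomes
\[
-\xi_{\alpha}+\frac{n\sum_{l=1}^{k}\lambda_{l}(\bm{\mu}_{l}-\widetilde{\bm{\mu}})\trans(\bm{\mu}_{l}-\widetilde{\bm{\mu}})}{\sqrt{2(k-1)\,{\rm{tr}}(\bm{\Sigma}^{2})}},
\]
which, writing $\bm{\Sigma}=\bm{\Sigma}_{1}$, is exactly the stated formula, with the $o(1)$ inherited unchanged from Theorem \CG{3.2}. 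The only point needing a word of care is that the denominator stays bounded away from zero so the division is legitimate, but this holds because ${\rm{tr}}(\bm{\Sigma}^{2})>0$ and $k\geq 2$, and it is in any case already implicit in the validity of \eqref{eq3}. I do not anticipate any genuine obstacle here: the entire content of the corollary is the elementary identity $\sum_{l}(1-\lambda_{l})^{2}+\sum_{l\neq s}\lambda_{l}\lambda_{s}=k-1$.
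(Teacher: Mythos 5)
Your proposal is correct and is essentially the paper's own route: the corollary is stated there as an immediate consequence of Theorem \CG{3.2}, obtained exactly as you do by setting $\bm{\Sigma}_{1}=\cdots=\bm{\Sigma}_{k}$ in \eqref{eq3} and using $\sum_{l=1}^{k}\lambda_{l}=1$ to reduce the bracket $\sum_{l}(1-\lambda_{l})^{2}+\sum_{l\neq s}\lambda_{l}\lambda_{s}$ to $k-1$. Your algebra, including absorbing the factor $\sqrt{2}/2$ into the denominator, checks out, so nothing further is needed.
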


\section{Theoretical comparisons and simulations}\label{sec:4}
In this section, we provide theoretical comparisons between the
proposed test and some existing test. For $k$ sample BF problem,
\cite{Cao:2014} and \cite{Hu:Bai:2015} construct test statistics via
the same statistic
\begin{equation*}
T_{CH}=(k-1)\sum\limits_{l=1}^{k}\sum\limits_{i\neq
j}^{n_{l}}{\bm{X}_{li}\trans}\bm{X}_{lj}\big/n_{l}(n_{l}-1)-\sum\limits_{l\neq
s}^{k}{\bm{\overline{X}}_{l}\trans}\bm{\overline{X}}_{s},
\end{equation*}
which is an extension of the two sample test in \cite{Chen:Qin:2010}
to the case of $k$ samples. Depending on different estimators of
variance of $T_{CH}$, different test statistics have been proposed.
\cite{Cao:2014} used two different estimators of variance of
$T_{CH}$. One is similar to that in \cite{Chen:Qin:2010} and the
other is the same as that in Lemma \CG{3.1}. On the other hand,
\cite{Hu:Bai:2015} used the similar estimator to that in
\cite{Bai:Saranadasa:1996}. Under the assumptions similar to
(\CG{A1})-(\CG{A3}), \cite{Cao:2014} and \cite{Hu:Bai:2015} obtained
the same asymptotic distribution of their test statistics, say
$\widehat{T}_{CH} = T_{CH}/\hat \sigma_{CH}$ where $\hat
\sigma_{CH}$ represents the estimators of variance of $T_{CH}$
considered in \cite{Cao:2014} and \cite{Hu:Bai:2015}, as follows:
\begin{align}\label{eq4}
P(\widehat{T}_{CH}\geq\xi_{\alpha})=\Phi\left\{-\xi_{\alpha}+\frac{\dfrac{\sqrt{2}}{2}kn\sum\limits_{l=1}^{k}(\bm{\mu}_{l}-\overline{\bm{\mu}})^{\tiny{\rm{T}}}(\bm{\mu}_{l}-\overline{\bm{\mu}})}{\sqrt{(k-1)^{2}\sum\limits_{l=1}^{k}\lambda_{l}^{-2}{\rm{tr}}(\bm{\Sigma}_{l}^{2})+\sum\limits_{l\neq
s}(\lambda_{l}\lambda_{s})^{-1}{\rm{tr}}(\bm{\Sigma}_{l}\bm{\Sigma}_{s})}}\right\}+o(1).
\end{align}

Since all tests in \cite{Cao:2014} and \cite{Hu:Bai:2015} have the
same asymptotic distribution, we use the test statistic in
\cite{Hu:Bai:2015}, $\widehat{T}_{H}:=T_{CH}/\widetilde{\sigma}$
where
\begin{align*}
\widetilde{\sigma}^{2}=2(k-1)^{2}\sum\limits_{l=1}^{k}\widetilde{{\rm{tr}}(\bm{\Sigma}_{l}^{2})}\big/{n_{l}(n_{l}-1)}+\sum\limits_{l\neq
s}^{k}\widehat{2{\rm{tr}}(\bm{\Sigma}_{l}\bm{\Sigma}_{s})}\big/{n_{l}n_{s}}.
\end{align*}
We provide numerical studies and theoretical comparisons between our
proposed test statistic in \eqref{eqn:proposed}  and $\widehat{T}_H$
in the following sections.

\subsection{Theoretical comparisons}\label{sec:4.1}
We first compare the power functions of the proposed test
$\widehat{T}$ and $\widehat{T}_H$ when all sample sizes are the
same, where $\widehat{T}$ is either $\widehat{T}_1$ or
$\widehat{T}_2$ in \eqref{eqn:proposed}. The following Corollary
\CG{4.1} states that $\widehat{T}$ and $\widehat{T}_{H}$ have the
same asymptotic power under balanced model. This can be shown
directly from \eqref{eq3} and \eqref{eq4}.
\begin{corollary}\label{cor:balanced}
The test statistics $\widehat{T}$ and $\widehat{T}_{H}$ have the
same asymptotic power under balanced model which means each group
has equal sample sizes.
\end{corollary}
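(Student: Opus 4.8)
The plan is to substitute the balanced-design condition $n_1 = \cdots = n_k = n/k$, equivalently $\lambda_1 = \cdots = \lambda_k = 1/k$, into the two asymptotic power formulas \eqref{eq3} and \eqref{eq4} and verify that the arguments of $\Phi(\cdot)$ coincide. Since both power functions have the form $\Phi(-\xi_\alpha + \text{(signal)}/\text{(noise)}) + o(1)$, it suffices to show that the numerator ratios and denominator ratios agree after the substitution; equivalently, that the full ratio (signal over noise) is the same for $\widehat{T}$ and $\widehat{T}_H$.

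First I would handle the numerators. In \eqref{eq3} the numerator is $\frac{\sqrt2}{2} n \sum_l \lambda_l (\bm{\mu}_l - \widetilde{\bm{\mu}})\trans(\bm{\mu}_l - \widetilde{\bm{\mu}})$, while in \eqref{eq4} it is $\frac{\sqrt2}{2} k n \sum_l (\bm{\mu}_l - \overline{\bm{\mu}})\trans(\bm{\mu}_l - \overline{\bm{\mu}})$. Under $\lambda_l = 1/k$ we have $\widetilde{\bm{\mu}} = \sum_l \lambda_l \bm{\mu}_l = \frac1k\sum_l \bm{\mu}_l = \overline{\bm{\mu}}$, so the quadratic forms match term by term, and the prefactor $n \sum_l \lambda_l(\cdots) = \frac{n}{k}\sum_l(\cdots)$ equals $\frac1k \cdot kn\sum_l(\cdots)$... wait, that needs care: $\frac{\sqrt2}{2} n \sum_l \frac1k(\cdots) = \frac{\sqrt2}{2}\frac{n}{k}\sum_l(\cdots)$, whereas \eqref{eq4} gives $\frac{\sqrt2}{2} kn\sum_l(\cdots)$. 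These differ by a factor $k^2$ in the numerator, so I must check that the denominator of \eqref{eq4} is correspondingly $k^2$ times larger than that of \eqref{eq3} after substitution, which is the real content of the claim.

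Next I would handle the denominators. In \eqref{eq3} the denominator is $\sqrt{\sum_l(1-\lambda_l)^2\,{\rm tr}(\bm{\Sigma}_l^2) + \sum_{l\neq s}\lambda_l\lambda_s\,{\rm tr}(\bm{\Sigma}_l\bm{\Sigma}_s)}$; with $\lambda_l = 1/k$ this becomes $\sqrt{(1-1/k)^2\sum_l{\rm tr}(\bm{\Sigma}_l^2) + k^{-2}\sum_{l\neq s}{\rm tr}(\bm{\Sigma}_l\bm{\Sigma}_s)} = k^{-1}\sqrt{(k-1)^2\sum_l{\rm tr}(\bm{\Sigma}_l^2) + \sum_{l\neq s}{\rm tr}(\bm{\Sigma}_l\bm{\Sigma}_s)}$. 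In \eqref{eq4} the denominator is $\sqrt{(k-1)^2\sum_l\lambda_l^{-2}{\rm tr}(\bm{\Sigma}_l^2) + \sum_{l\neq s}(\lambda_l\lambda_s)^{-1}{\rm tr}(\bm{\Sigma}_l\bm{\Sigma}_s)}$; with $\lambda_l^{-1} = k$ this becomes $\sqrt{(k-1)^2 k^2\sum_l{\rm tr}(\bm{\Sigma}_l^2) + k^2\sum_{l\neq s}{\rm tr}(\bm{\Sigma}_l\bm{\Sigma}_s)} = k\sqrt{(k-1)^2\sum_l{\rm tr}(\bm{\Sigma}_l^2)+\sum_{l\neq s}{\rm tr}(\bm{\Sigma}_l\bm{\Sigma}_s)}$. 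So the \eqref{eq4} denominator is exactly $k^2$ times the \eqref{eq3} denominator. Combining with the factor-$k^2$ discrepancy found in the numerators, the two ratios are identical, hence the arguments of $\Phi$ agree and the two asymptotic power functions coincide up to $o(1)$. The only mild obstacle is pure bookkeeping of the powers of $k$ — making sure the $(1-\lambda_l)^2$ versus $\lambda_l^{-2}$ and the cross-term scalings line up — but there is no analytic difficulty once \eqref{eq3} and \eqref{eq4} are taken as given; I would present the substitution cleanly and remark that the result follows because both numerator and denominator scale by the same power of $k$.
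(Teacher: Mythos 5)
Your proposal is correct and follows exactly the route the paper intends: the paper proves Corollary 4.1 simply by noting it "can be shown directly from \eqref{eq3} and \eqref{eq4}," and your substitution of $\lambda_l=1/k$ (so $\widetilde{\bm{\mu}}=\overline{\bm{\mu}}$) with the matching factor-$k^{2}$ scaling of numerator and denominator is precisely that verification, carried out explicitly and without error.
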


For more general cases such as unbalanced sample sizes, it is not
easy to compare the asymptotic power functions of $\widehat{T}$ and
$\widehat{T}_{H}$. We compare all test statistics under simple and
typical situations so that we can compare the power functions
analytically. To obtain rough depiction, we assume
$\bm{\Sigma}_{1}=\cdots=\bm{\Sigma}_{k}$ for the following cases. We
define the asymptotic relative efficiency (ARE) of $\widehat{T}$ to
$\widehat{T}_{H}$ which is the ratio of two signal-to-noise ratios:
\begin{align}
{\rm{ARE}}(\widehat{T},\widehat{T}_{H})&:=\frac{{\rm{E}}(\widehat{T})}{\sqrt{{\rm{Var}}(\widehat{T})}}\Biggr/\frac{{\rm{E}}(\widehat{T}_H)}{\sqrt{{\rm{Var}}(\widehat{T}_H)}}\nonumber\\
&=\frac{n\sum\limits_{l=1}^{k}\lambda_{l}(\bm{\mu}_{l}-\widetilde{\bm{\mu}})^{\trans}(\bm{\mu}_{l}-\widetilde{\bm{\mu}})}{\sqrt{2(k-1){\rm{tr}}(\bm{\Sigma}_{1}^2)}}\Biggr/\frac{kn\sum\limits_{l=1}^{k}(\bm{\mu}_{l}-\overline{\bm{\mu}})^{\trans}(\bm{\mu}_{l}-\overline{\bm{\mu}})}{\sqrt{2{\rm{tr}}(\bm{\Sigma}_{1}^{2})}\sqrt{(k-1)^{2}\sum\limits_{l=1}^{k}\lambda_{l}^{-2}+\sum\limits_{l\neq
s}(\lambda_{l}\lambda_{s})^{-1}}}.\label{eqn:ARE}
\end{align}

If the ${\rm{ARE}}(\widehat{T},\widehat{T}_{H})>1$, the asymptotic
power of $\widehat{T}$ is larger than that of $\widehat{T}_H$ from
\eqref{eq3} and \eqref{eq4}.

Based on the ARE \eqref{eqn:ARE}, we consider the following two representative cases:
\begin{enumerate}
\item [$(i)$]  $\bm{\mu}_{1}=\cdots=\bm{\mu}_{k-1}\neq \bm{\mu}_{k}$.
Without loss of generality, we set
$\bm{\mu}_{1}=\cdots=\bm{\mu}_{k-1}=0\neq \bm{\mu}_{k}$. Then, we have
\begin{align*}
{\rm{ARE}}(\widehat{T},\widehat{T}_{H})&
=\frac{\lambda_{k}(1-\lambda_{k})}{(k-1)\sqrt{k-1}}\sqrt{k(k-2)\sum\limits_{l=1}^{k}\lambda_{l}^{-2}+\left(\sum\limits_{l=1}^{k}\lambda_{l}^{-1}\right)^{2}}.
\end{align*}
From this, we see that  ${\rm{ARE}}(\widehat{T},\widehat{T}_{H})$ is
larger than 1 if there exists at least one $l\in\{1,\ldots,k-1\}$ such
that $\lambda_{l}$ is very small, for example $\lambda_l$ is close
to 0 for some $l \in \{1,\ldots, k-1 \}$. This is because the right hand side of $ARE(\hat T, \hat T_h)$ is unbounded as $\lambda_l$ is close to $0$ for at least one  $l \in \{1,\ldots, k-1  \}1$.
Since $\sum_{l=1}^k \lambda_l$,
it indicates that if $\lambda_{k}$ is close to 1, then most of $\lambda_{l}$s for $1\leq l \leq k-1$ are close to 0 which results in ${\rm{ARE}}(\widehat{T},\widehat{T}_{H})>1$.
Furthermore, we can get an another low bound of  ${\rm ARE}(\widehat{T}, \widehat{T}_H)$
such as  ${\rm ARE}(\widehat{T},\widehat{T}_{H}) >
k^{2}\lambda_{k}(1-\lambda_{k})/(k-1)$ based on mean value and
Jensen's inequality (\cite{Mitrinovic:1993}).
The low bound depends on only $\lambda_k$ and it shows that
if
$\lambda_{k}\in(1/k,(k-1)/k)$, then we have
$k^{2}\lambda_{k}(1-\lambda_{k})/(k-1) > 1$ regardless of configurations of all other $\lambda_l$
for $1\leq l \leq k-1$. This shows that as the number of groups ($k$) increases,
the interval $(1/k, (1-k)/k)$ is getting wider, so
$\widehat{T}$ is expected to have more power than $\widehat{T}_H$ as the number of groups ($k$) increases.

\item [$(ii)$] As a second case,
we assume all mean vectors have the same direction such that  $(\bm{\mu}_l -\bm{\mu}_1) = \tau_l  (\bm{\mu}_k - \bm{\mu}_1) $
for $2\leq l \leq k-1$ and some constants $\tau_l$.
For simplicity, we consider $k=3$ and
$\lambda_{1}=\lambda_{2}$.
For $k=3$, without loss of generality, we can assume
$\bm{\mu}_{3}\neq 0$, $\bm{\mu}_{1}=0$ and
$\bm{\mu}_{2}=\tau\bm{\mu}_{3}$ with $\tau\neq 0$.
From  $ \lambda_1
= \lambda_2 = (1-\lambda_{3})/2$ and , we have
\begin{align*}
{\rm{ARE}}(\widehat{T},\widehat{T}_{H})&=\frac{\tau^{2}\lambda_{3}^{-1}+(\tau-2)^{2}}{4\sqrt{2}(\tau^{2}-\tau+1)}\sqrt{9\lambda_{3}^{2}+1}.
\end{align*}
For all $\tau (\neq 0)$, the equation
${\rm{ARE}}(\widehat{T},\widehat{T}_{H})=1$ has a fixed solution
$\lambda_{3}=1/3$.
In addition to this solution, there are more solutions and we provide approximate solutions by
numerical studies as follows:

\begin{itemize}
\item[(a)] The case of $\tau\in(0.009,0.732]\cup [-2.732,-0.009)$: In this case, there are two
solutions of ${\rm{ARE}}(\widehat{T},\widehat{T}_{H})=1$ which are
$(1/3,1)$ and $(\lambda_{3}^{0},1)$, where $0<\lambda_{3}^{0}<1/3$.
Note that $\lambda_{3}^{0}$ depends on $\tau$. If
$\lambda_{3}\in(\lambda_{3}^{0},1/3)$,
${\rm{ARE}}(\widehat{T},\widehat{T}_{H})<1$; otherwise
${\rm{ARE}}(\widehat{T},\widehat{T}_{H})\geq 1$.
This implies that
$\widehat{T}$ has more powers than $\widehat{T}_{H}$ when
$\lambda_{3} \in [\lambda_{3}^{0},1/3]^c$. The left panel
in Figure 1 shows this case.

\item[(b)] The case of $\tau\in(0.724,1.119]\cup (9.353,\infty)\cup (-\infty,-2.732)$: The solutions of
${\rm{ARE}}(\widehat{T},\widehat{T}_{H})=1$ are $(1/3,1)$ and
$(\lambda_{3}^{1},1)$ with $\lambda_{3}^{1}>1/3$. The right panel in
Figure 1 shows that $\lambda_{3}^{1}$ reaches 1 very rapidly when
$\tau > 0.724$ and $\tau\neq 2$. We see that
${\rm{ARE}}(\widehat{T},\widehat{T}_{H})$ is significantly larger
than 1 when $\lambda_{3} < 1/3$ while
${\rm{ARE}}(\widehat{T},\widehat{T}_{H})$ is slightly less than 1
when $1/3<\lambda_{3}<\lambda_{3}^1$.
This shows that $\widehat T$ has significantly larger powers than $\widehat{T}_H$
in most cases while $\widehat{T}_H$ can have slightly more powers;on the other hand,
even when $\widehat{T}_H$ has more powers than $\widehat{T}$, the difference is not that significant.
This case is shown in the
right panel in Figure 1.

\item[(c)] The case of $\tau\in(1.119,9.353]$: There is only one solution of the equation
${\rm{ARE}}(\widehat{T},\widehat{T}_{H})=1$ which is $(1/3,1)$. When
$\lambda_{3}$ is less than $1/3$,
${\rm{ARE}}(\widehat{T},\widehat{T}_{H})>1$ which means that
$\widehat{T}$ has larger  powers than those of
$\widehat{T}_{H}$. Moreover, we see that
${\rm{ARE}}(\widehat{T},\widehat{T}_{H})$ is increasing as
$\lambda_3$ decreases. This case is shown in the left panel
in Figure 2.

\item[(d)] The case of $\tau\in(0,0.009]\cup [-0.009,0)$: The equation
${\rm{ARE}}(\widehat{T},\widehat{T}_{H})=1$ has only one solution
$(1/3,1)$. When $\lambda_{3}$ is more than $1/3$,
${\rm{ARE}}(\widehat{T},\widehat{T}_{H})>1$ which illustrates
$\widehat{T}$ has larger  powers than those of
$\widehat{T}_{H}$. See the right panel in Figure 2 for this case.
\end{itemize}
\end{enumerate}

To summarize,
we see that the proposed test $\widehat{T}$
has potential to have more power than $\widehat{T}_H$ when sample sizes are highly unbalanced
while $\widehat T$ and $\widehat{T}_H$
have the same asymptotic power from Corollary \ref{cor:balanced}.
We provide numerical studies to demonstrate this point in the following section.

\begin{figure}[H]
\centering{ \resizebox{7cm}{6cm}
{\includegraphics[width=90mm]{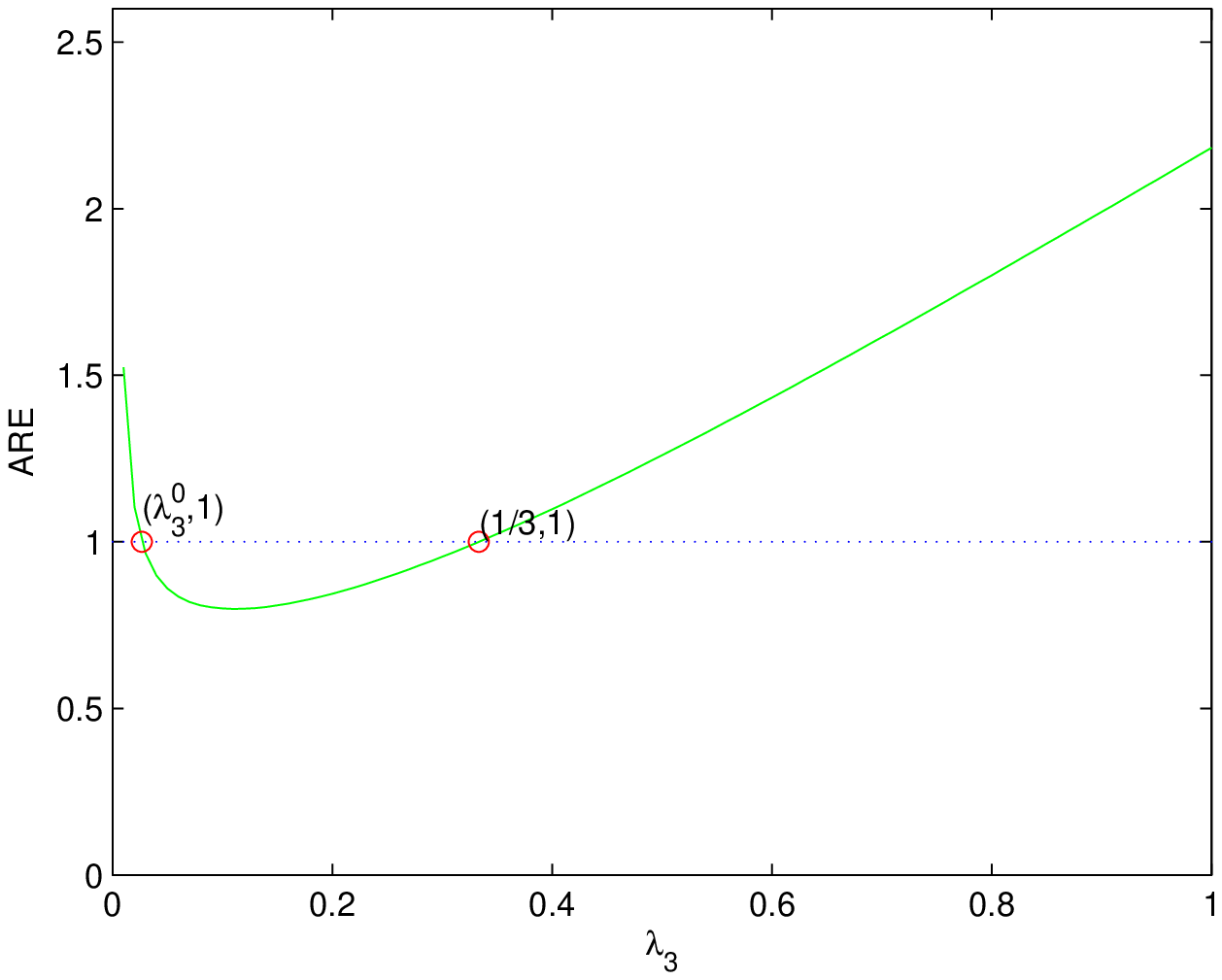}}
 \resizebox{7cm}{6cm}
 {\includegraphics[width=90mm]{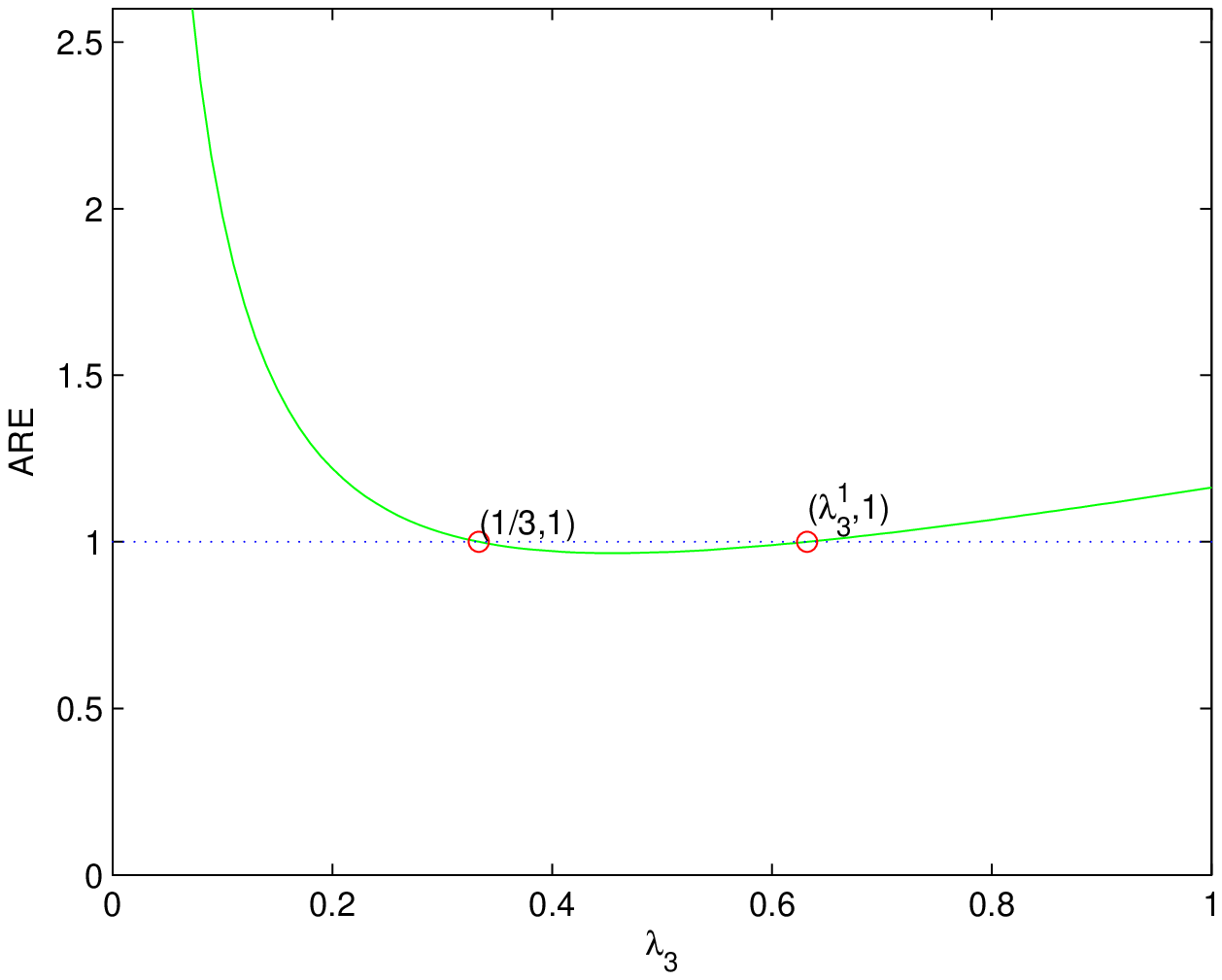}}}{\par{\footnotesize\textbf{Fig. 1} ${\rm{ARE}}(\widehat{T},\widehat{T}_{H})$ for
$\tau=0.2$ (the left) and $\tau=-25$ (the right)}}.
\end{figure}

\begin{figure}[H]
\centering{ \resizebox{7cm}{6cm}
{\includegraphics[width=90mm]{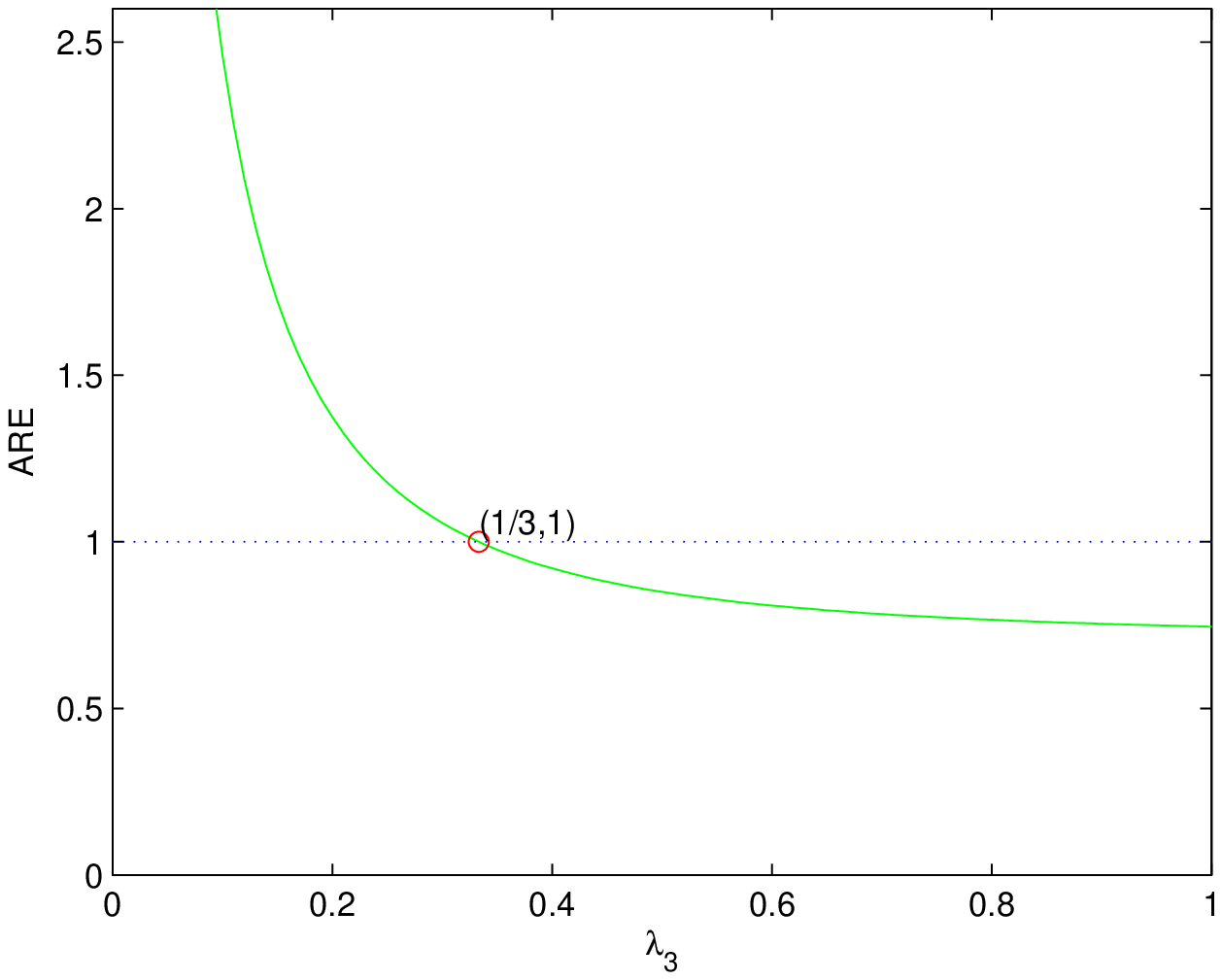}}
 \resizebox{7cm}{6cm}
 {\includegraphics[width=90mm]{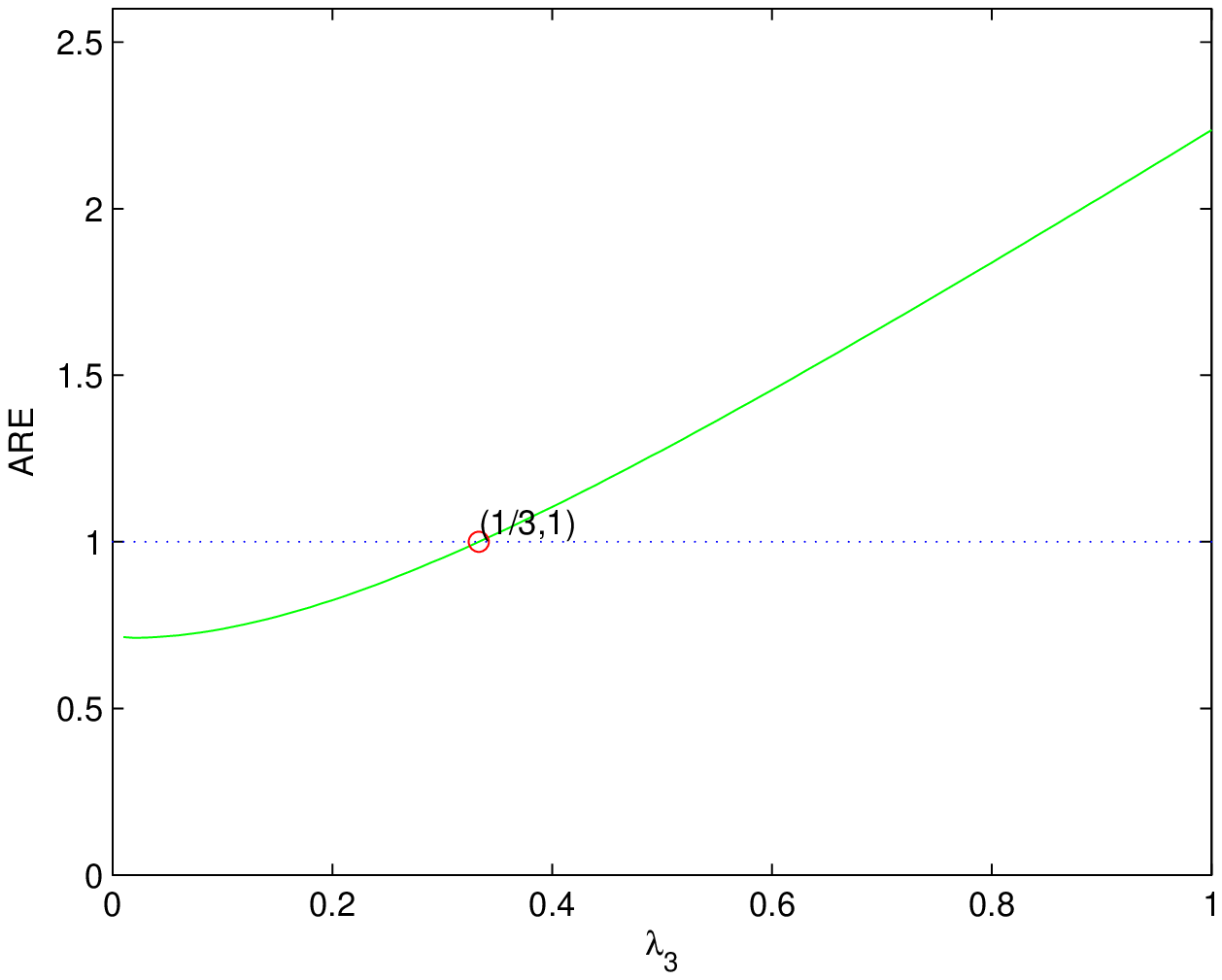}}}{\par{\footnotesize\textbf{Fig. 2}
${\rm{ARE}}(\widehat{T},\widehat{T}_{H})$ for $\tau=2$ (the left)
and $\tau=0.002$ (the right)}}.
\end{figure}

\subsection{Simulations}
As shown in Corollary \ref{cor:balanced}, $\widehat{T}$ and
$\widehat{T}_H$ have the same asymptotic power function for balanced
sample sizes. Therefore, we conduct simulations only for unbalanced
sample sizes to compare $\widehat{T}$ ($\widehat{T}_1$ and
$\widehat{T}_2$) with $\widehat{T}_{H}$. We set $k=3$ and generate
$X_{lij}$ from the following two models.
\begin{itemize}
\item The first model: we consider ``Two-dependence'' moving average model
\begin{align*}
X_{lij}=\rho_{l1}Z_{lij}+\rho_{l2}Z_{l,i,j+1}+\rho_{l3}Z_{l,i,j+2}+\mu_{lj}
\end{align*}
for $i=1,\ldots,n_{l}$, $l=1,2,3$ and $j=1,\ldots,p$, where $Z$'s
are i.i.d. random variables distributed with centered $\chi^{2}(4)$
and $N(0,1)$, respectively. $\rho$'s and $\mu$'s are constants such
that $\bm{\mu}_{l}=(\mu_{l1},\ldots,\mu_{lp})\trans$. Moreover,
$\rho$'s were generated independently from $U(2,3)$ with
$\rho_{11}=2.1984$, $\rho_{12}=2.5743$, $\rho_{13}=2.1316$,
$\rho_{21}=2.8147$, $\rho_{22}=2.9058$, $\rho_{23}=2.1270$,
$\rho_{31}=2.9134$, $\rho_{32}=2.6324$ and $\rho_{33}=2.0975$, and
were kept fixed throughout the simulations. For power studies,
population means are fixed as $\bm{\mu}_{1}=\bm{\mu}_{2}=0$, while
the third mean vector consists of $[0.05*p]$ components equal to
$\delta$ and the others equal to zero where $\delta$ is related to
the following standard parameter
\begin{align}
\theta=\frac{\sum\limits_{l=1}^{3}(\bm{\mu}_{l}-\overline{\bm{\mu}})\trans(\bm{\mu}_{l}-\overline{\bm{\mu}})}{\sqrt{4\sum\limits_{l=1}^{3}\lambda_{l}^{-2}{\rm{tr}}(\bm{\Sigma}_{l}^{2})+\sum\limits_{l\neq
s}^{3}(\lambda_{l}\lambda_{s})^{-1}{\rm{tr}}(\bm{\Sigma}_{l}\bm{\Sigma}_{s})}}.
\label{eqn:theta}
\end{align}
\end{itemize}

\begin{itemize}
\item The second model: for every $i\in\{{1,\ldots,n_{l}}\}$ and
$l\in\{{1,2,3\}}$, we consider
\begin{align*}
\bm{X}_{li}=\bm{\Gamma}_{l}\bm{Z}_{li}+\bm{\mu}_{l},
\end{align*}
where
$\bm{\Sigma}_{l}=\bm{\Gamma}_{l}^2=\bm{W}_{l}\bm{\Psi}_{l}\bm{W}_{l}$,
$\bm{W}_{l}={\rm{diag}}(w_{l1},\ldots,w_{lp})$ with
$w_{lj}=l-(j-1)/p$, $j=1,\ldots,p$, and $\bm{\Psi}_{l}=(\psi_{ljk})$
with $\psi_{ljj}=1$,
$\psi_{ljk}={(-1)^{j+k}}(0.05*b_{l})^{{\vert{j-k}\vert}^{0.1}}$ when
$j\neq{k}$, where $b_{1}=2$, $b_{2}=1$ and $b_{3}=3$. In power
simulation, we set $\bm{\mu}_{1}=0$,
$-\bm{\mu}_{3}=\bm{\mu}_{2}=(u_{1},\ldots,u_{p})^{\trans}$, where
$u_{i}=(-1)^{i}v_{i}$ with $v_{i}'s$ are i.i.d. $U(0,a)$ which
denotes uniform distribution with the support $(0,a)$.
\end{itemize}

We consider three cases for data dimensions and sample sizes.

\begin{itemize}
\item \bm{{\rm{Case~1}}} Let $p$=50, 100, 200, 400, 800, 1000 and $n_{1}=2m$, $n_{2}=3m$ and $n_{3}=5m$ with $m\in\{5, 10,
20\}$.
\end{itemize}

\begin{itemize}
\item \bm{{\rm{Case~2}}} Let $p$=400, $n_{1}=n_{2}$=10, $n_{3}=80$ and $n_{1}=n_{2}=15, n_{3}=70$, respectively.
\end{itemize}

\begin{itemize}
\item \bm{{\rm{Case~3}}} Let $p$=400, $n_{1}$=10, $n_{2}$=20, $n_{3}=70$ and $n_{1}=10, n_{2}=30, n_{3}=60$, respectively.
\end{itemize}

Empirical sizes and powers are computed under the nominal level $\alpha=0.05$
with $5,000$ replications.
For the first model, the standard parameter
$\theta$ in \eqref{eqn:theta} is selected as 0 and 0.005 for size and power, respectively for
\bm{{\rm{Case~1}}}. For the second
model, $a$ is taken as 0 and 0.2 for size and power, respectively in
\bm{{\rm{Case~1}}}. For the two models, we
also consider the cases of  $H_1$ for different configurations of $\theta$ and $a$ in
\bm{{\rm{Case~2}}} and \bm{{\rm{Case~3}}}.
%Throughout the simulations, the results for the normal case have the similar
%pattern with those for the non-normal case, and are not reported
%here.

%\newpage
\begin{table}[H]
\tabcolsep 4mm\par{\footnotesize\textbf{Table 1} Empirical sizes and
powers of $\widehat{T}_1$, $\widehat{T}_2$ and $\widehat{T}_{H}$ in
\bm{{\rm{Case~1}}} under the first model.}
\begin{center}
\def\temptablewidth{1\textwidth}
\begin{tabular*}{\temptablewidth}{@{\extracolsep{\fill}}llllllll}
\hline
&  & \multicolumn{3}{c}{sizes}& \multicolumn{3}{c}{powers}\\
\cline{3-5}    \cline{6-8}
$p$ &  $n$   & $\widehat{T}_1$ & $\widehat{T}_2$ &$\widehat{T}_{H}$   & $\widehat{T}_1$ & $\widehat{T}_2$ & $\widehat{T}_{H}$\\
\hline
50  &50  & 0.0626  & 0.0594  & 0.0600 & 0.0896 & 0.0850 & 0.0808 \\

    &100 & 0.0564  & 0.0534  & 0.0584 & 0.0830 & 0.0810 & 0.0702 \\

    &200 & 0.0576  & 0.0570  & 0.0590 & 0.0830 & 0.0806 & 0.0760 \\

100 &50  & 0.0588  & 0.0546  & 0.0564 & 0.1218 & 0.1176 & 0.0936 \\

    &100 & 0.0630  & 0.0598  & 0.0494 & 0.1204 & 0.1178 & 0.1022 \\

    &200 & 0.0550  & 0.0544  & 0.0560 & 0.1282 & 0.1252 & 0.1054 \\

200 &50  & 0.0590  & 0.0562  & 0.0604 & 0.2190 & 0.2060 & 0.1604 \\

    &100 & 0.0528  & 0.0502  & 0.0548 & 0.2240 & 0.2150 & 0.1618\\

    &200 & 0.0558  & 0.0552  & 0.0536 & 0.2224 & 0.2172 & 0.1664 \\

400 &50  & 0.0536  & 0.0492  & 0.0510 & 0.4848 & 0.4656 & 0.3394 \\

    &100 & 0.0612  & 0.0580  & 0.0604 & 0.4624 & 0.4536 & 0.3412 \\

    &200 & 0.0578  & 0.0556  & 0.0576 & 0.4870 & 0.4812 & 0.3524 \\

800 &50  & 0.0580  & 0.0522  & 0.0512 & 0.9146 & 0.9060 & 0.7688 \\

    &100 & 0.0600  & 0.0560  & 0.0572 & 0.9080 & 0.9040 & 0.7634 \\

    &200 & 0.0534  & 0.0514  & 0.0516 & 0.9106 & 0.9078 & 0.7778\\

1000 &50  & 0.0552  & 0.0488  & 0.0478 & 0.9766 & 0.9732 & 0.8940\\

     &100 & 0.0524  & 0.0500  & 0.0462 & 0.9776 & 0.9764 & 0.8994 \\

     &200 & 0.0550  & 0.0532  & 0.0534 & 0.9752 & 0.9746 & 0.9060 \\
\hline
 \end{tabular*}
\end{center}
\end{table}

\begin{table}[H]
\tabcolsep 4mm\par{\footnotesize\textbf{Table 2} Empirical sizes and
powers of $\widehat{T}_1$, $\widehat{T}_2$ and $\widehat{T}_{H}$ in
\bm{{\rm{Case~1}}} under the second model.}
\begin{center}
\def\temptablewidth{1\textwidth}
\begin{tabular*}{\temptablewidth}{@{\extracolsep{\fill}}llllllll}
\hline
&  & \multicolumn{3}{c}{sizes}& \multicolumn{3}{c}{powers}\\
\cline{3-5}    \cline{6-8}
$p$ &  $n$  & $\widehat{T}_1$ & $\widehat{T}_2$ &$\widehat{T}_{H}$   & $\widehat{T}_1$ & $\widehat{T}_2$ & $\widehat{T}_{H}$\\
\hline
50  &55  & 0.0617  & 0.0557  & 0.0560 & 0.1782 & 0.1697 & 0.1573 \\

    &110 & 0.0602  & 0.0578  & 0.0572 & 0.2828 & 0.2774 & 0.2588 \\

    &220 & 0.0608  & 0.0594  & 0.0598 & 0.5153 & 0.5119 & 0.4848 \\

100 &55  & 0.0634  & 0.0608  & 0.0620 & 0.2276 & 0.2182 & 0.2023 \\

    &110 & 0.0660  & 0.0640  & 0.0658 & 0.4077 & 0.4002 & 0.3698 \\

    &220 & 0.0628  & 0.0622  & 0.0640 & 0.7933 & 0.7909 & 0.7705 \\

200 &55  & 0.0588  & 0.0550  & 0.0564 & 0.2838 & 0.2742 & 0.2556 \\

    &110 & 0.0614  & 0.0604  & 0.0604 & 0.5548 & 0.5484 & 0.5202\\

    &220 & 0.0598  & 0.0588  & 0.0588 & 0.8710 & 0.8692 & 0.8566 \\

400 &55  & 0.0626  & 0.0610  & 0.0612 & 0.3494 & 0.3406 & 0.3226 \\

    &110 & 0.0644  & 0.0630  & 0.0636 & 0.6632 & 0.6596 & 0.6360 \\

    &220 & 0.0586  & 0.0580  & 0.0608 & 0.9424 & 0.9418 & 0.9328 \\

800 &55  & 0.0667  & 0.0658  & 0.0652 & 0.4122 & 0.4046 & 0.3780 \\

    &110 & 0.0628 & 0.0614   & 0.0610 & 0.7762 & 0.7734 & 0.7512\\

    &220 & 0.0562  & 0.0554  & 0.0564 & 0.9928 & 0.9926 & 0.9912\\

1000 &55  & 0.0574  & 0.0564  & 0.0554 & 0.4592 & 0.4516 & 0.4236 \\

    &110 & 0.0588 & 0.0578   & 0.0594 & 0.8406 & 0.8380 & 0.8212\\

    &220 & 0.0566  & 0.0558  & 0.0571 & 0.9952 & 0.9952 & 0.9950\\
\hline
 \end{tabular*}
\end{center}
\end{table}

\begin{table}[H]
\tabcolsep 4mm\par{\footnotesize\textbf{Table 3} Empirical sizes and
powers of $\widehat{T}_1$, $\widehat{T}_2$ and $\widehat{T}_{H}$ in
\bm{{\rm{Case~2}}} under the first model.}
\begin{center}
\def\temptablewidth{1\textwidth}
\begin{tabular*}{\temptablewidth}{@{\extracolsep{\fill}}lllllll}
\hline
 & \multicolumn{6}{c}{$p=400$, $n=100$}\\
\cline{2-7} & \multicolumn{3}{c}{$n_{1}=n_{2}=10, n_{3}=80$}&
\multicolumn{3}{c}{$n_{1}=n_{2}=15, n_{3}=70$}\\
\cline{2-4}    \cline{5-7}
$\theta$  & $\widehat{T}_1$ & $\widehat{T}_2$ &$\widehat{T}_{H}$   & $\widehat{T}_1$ & $\widehat{T}_2$ & $\widehat{T}_{H}$\\
\hline
0     & 0.0572  & 0.0494  & 0.0500  & 0.0606  & 0.0564 & 0.0584 \\

0.002 & 0.0840  & 0.0752  & 0.0670  & 0.0886  & 0.0814 & 0.0678 \\

0.003 & 0.1408  & 0.1234  & 0.0850  & 0.1400 & 0.1304  & 0.0936 \\

0.004 & 0.2358  & 0.2146  & 0.1242  & 0.2444 & 0.2350  & 0.1430 \\

0.005 & 0.3990  & 0.3768  & 0.1884  & 0.4172 & 0.3994  & 0.2248 \\

0.006 & 0.6042  & 0.5786  & 0.2630  & 0.6506 & 0.6316  & 0.3414 \\

0.007 & 0.8098  & 0.7948  & 0.4140  & 0.8340 & 0.8242  & 0.5066 \\

0.008 & 0.9324  & 0.9232  & 0.5828  & 0.9442 & 0.9390  & 0.6870 \\

0.009 & 0.9840  & 0.9816  & 0.7376  & 0.9898 & 0.9882  & 0.8414 \\
\hline
 \end{tabular*}
\end{center}
\end{table}

\begin{table}[H]
\tabcolsep 4mm\par{\footnotesize\textbf{Table 4} Empirical sizes and
powers of $\widehat{T}_1$, $\widehat{T}_2$ and $\widehat{T}_{H}$ in
\bm{{\rm{Case~2}}} under the second model.}
\begin{center}
\def\temptablewidth{1\textwidth}
\begin{tabular*}{\temptablewidth}{@{\extracolsep{\fill}}lllllll}
\hline
 & \multicolumn{6}{c}{$p=400$, $n=100$}\\
\cline{2-7} & \multicolumn{3}{c}{$n_{1}=n_{2}=10, n_{3}=80$}&
\multicolumn{3}{c}{$n_{1}=n_{2}=15, n_{3}=70$}\\
\cline{2-4}    \cline{5-7}
$a$   & $\widehat{T}_1$ & $\widehat{T}_2$ &$\widehat{T}_{H}$   & $\widehat{T}_1$ & $\widehat{T}_2$ & $\widehat{T}_{H}$\\
\hline
0    & 0.0620  & 0.0560  & 0.0546  & 0.0546 & 0.0508  & 0.0516 \\

0.05 & 0.0844  & 0.0756  & 0.0690  & 0.0816  & 0.0774  & 0.0732 \\

0.10 & 0.1794  & 0.1640  & 0.1314  & 0.1540 & 0.1468 & 0.1380 \\

0.15 & 0.3424  & 0.3254  & 0.2600  & 0.3812 & 0.3718 & 0.3456 \\

0.20 & 0.5314  & 0.5110  & 0.4190  & 0.6314 & 0.6188 & 0.6086 \\

0.25 & 0.8102  & 0.7954  & 0.7182  & 0.8066 & 0.7996 & 0.7902 \\

0.30 & 0.9276  & 0.9178  & 0.8710  & 0.9646 & 0.9626 & 0.9552 \\
\hline
 \end{tabular*}
\end{center}
\end{table}

\begin{table}[H]
\tabcolsep 4mm\par{\footnotesize\textbf{Table 5} Empirical sizes and
powers of $\widehat{T}_1$, $\widehat{T}_2$ and $\widehat{T}_{H}$ in
\bm{{\rm{Case~3}}} under the first model.}
\begin{center}
\def\temptablewidth{1\textwidth}
\begin{tabular*}{\temptablewidth}{@{\extracolsep{\fill}}lllllll}
\hline
 & \multicolumn{6}{c}{$p=400$, $n=100$}\\
\cline{2-7} & \multicolumn{3}{c}{$n_{1}=10, n_{2}=20, n_{3}=70$}&
\multicolumn{3}{c}{$n_{1}=10, n_{2}=30, n_{3}=60$}\\
\cline{2-4}    \cline{5-7}
$\theta$  & $\widehat{T}_1$ & $\widehat{T}_2$ &$\widehat{T}_{H}$   & $\widehat{T}_1$ & $\widehat{T}_2$ & $\widehat{T}_{H}$\\
\hline
0     & 0.0604  & 0.0507  & 0.0512  & 0.0538  & 0.0440 & 0.0508 \\

0.002 & 0.0944  & 0.0826  & 0.0670  & 0.0974  & 0.0834 & 0.0698 \\

0.003 & 0.1572  & 0.1402  & 0.0916  & 0.1850 & 0.1590  & 0.1034 \\

0.004 & 0.2736  & 0.2510  & 0.1314  & 0.3290 & 0.2928  & 0.1476 \\

0.005 & 0.4598  & 0.4318  & 0.2098  & 0.5628 & 0.5282  & 0.2548 \\

0.006 & 0.6736  & 0.6462  & 0.3144  & 0.7990 & 0.7714  & 0.4034 \\

0.007 & 0.8716  & 0.8554  & 0.4910  & 0.9458 & 0.9364  & 0.5934 \\

0.008 & 0.9586  & 0.9526  & 0.6704  & 0.9902 & 0.9874  & 0.7566 \\

0.009 & 0.9934  & 0.9922  & 0.8254  & 0.9996 & 0.9996  & 0.9002 \\
\hline
 \end{tabular*}
\end{center}
\end{table}

\begin{table}[H]
\tabcolsep 4mm\par{\footnotesize\textbf{Table 6} Empirical sizes and
powers of $\widehat{T}_1$, $\widehat{T}_2$ and $\widehat{T}_{H}$ in
\bm{{\rm{Case~3}}} under the second model.}
\begin{center}
\def\temptablewidth{1\textwidth}
\begin{tabular*}{\temptablewidth}{@{\extracolsep{\fill}}lllllll}
\hline
 & \multicolumn{6}{c}{$p=400$, $n=100$}\\
\cline{2-7} & \multicolumn{3}{c}{$n_{1}=10, n_{2}=20, n_{3}=70$}&
\multicolumn{3}{c}{$n_{1}=10, n_{2}=30, n_{3}=60$}\\
\cline{2-4}    \cline{5-7}
$a$   & $\widehat{T}_1$ & $\widehat{T}_2$ &$\widehat{T}_{H}$   & $\widehat{T}_1$ & $\widehat{T}_2$ & $\widehat{T}_{H}$\\
\hline
0     & 0.0554  & 0.0526  & 0.0516  & 0.0608 & 0.0588  & 0.0568 \\

0.05 & 0.0836  & 0.0792  & 0.0762  & 0.0954  & 0.0942  & 0.0876 \\

0.10 & 0.2226  & 0.2164  & 0.1928  & 0.2160 & 0.2132 & 0.1820 \\

0.15 & 0.4210  & 0.4136  & 0.3666  & 0.4394 & 0.4352 & 0.3818 \\

0.20 & 0.6726  & 0.6672  & 0.6236  & 0.6872 & 0.6816 & 0.6288 \\

0.25 & 0.8964  & 0.8926  & 0.8660  & 0.9110 & 0.9082 & 0.8804 \\

0.30 & 0.9754  & 0.9746  & 0.9638  & 0.9822 & 0.9814 & 0.9740 \\
\hline
 \end{tabular*}
\end{center}
\end{table}

Tables 1-6 illustrate that the three tests can control the nominal
size $\alpha=0.05$. Especially, when $p$ is larger than $n$,
empirical sizes are closer to the nominal level. Furthermore, the
test $\widehat{T}_{1}$ and $\widehat{T}_{2}$ have similar powers and
they are more powerful than $\widehat{T}_{H}$ for Cases 1-3.
According to our theoretical comparisons in the
Section~\ref{sec:4.1}, we observed that $\widehat{T}$ tends to have
more powers than $\widehat{T}_{H}$ when we assume homogeneous
covariance matrices. Similarly, our numerical result shows that
$\widehat{T}$ has more powers than $\widehat{T}_{H}$ even under
inhomogeneous covariance matrices when sample sizes are unbalanced.

We also collect the information on the ratios
$\widehat{{\rm{tr}}(\bm{\Sigma}_{l}^{2})}/{\rm{tr}}(\bm{\Sigma}_{l}^{2})$
and
$\widetilde{{\rm{tr}}(\bm{\Sigma}_{l}^{2})}/{\rm{tr}}(\bm{\Sigma}_{l}^{2})$,
respectively. Without loss of generality, we here select $l=1$ for
different cases, where $p=50, 200, 500$ and $1000$, and $n_{1}$ is
from $10$ to $160$ with adding $30$ each time, respectively. Table 7
reports the results of empirical averages and standard deviations of
ratios, respectively. It shows that the proposed estimator of
${\rm{tr}}(\bm{\Sigma}_{1}^{2})$ owns much smaller bias than that in
\cite{Hu:Bai:2015} in all cases. Meanwhile, standard deviations of
the new estimator are better than those of Hu's estimator in most
cases.

\begin{table}[H]
\tabcolsep 4mm\par{\footnotesize\textbf{Table 7} Empirical averages
of
$\widehat{{\rm{tr}}(\bm{\Sigma}_{1}^{2})}/{\rm{tr}}(\bm{\Sigma}_{1}^{2})$
(NEW) and
$\widetilde{{\rm{tr}}(\bm{\Sigma}_{1}^{2})}/{\rm{tr}}(\bm{\Sigma}_{1}^{2})$
in Hu et al. (2015) with standard deviations in the parentheses, respectively.}
\begin{center}
\def\temptablewidth{1\textwidth}
\begin{tabular*}{\temptablewidth}{@{\extracolsep{\fill}}lllll}
\hline
$p$ &  $n_{1}$   & {New} & HB & ${\rm{tr}}(\bm{\Sigma}_{1}^{2})$\\
\hline
50  &10  & 1.0807 (0.4031)  & 1.2045 (0.4650)  & 25000 \\

    &40 & 1.0850 (0.1575)  & 1.1211 (0.1682)  \\

    &70 & 1.0852 (0.1147)  & 1.1059 (0.1194)  \\

    &100 & 1.0849 (0.0958)  & 1.0998 (0.0986)  \\

    &130 & 1.0858 (0.0853)  & 1.0974 (0.0872)  \\

    &160 & 1.0828 (0.0738)  & 1.0921 (0.0751)  \\

200 &10  & 1.0836 (0.3212)  & 1.2058 (0.3316)  & 100900 \\

    &40 & 1.0854 (0.0955)  & 1.1220 (0.0986)  \\

    &70 & 1.0847 (0.0640)  & 1.1056 (0.0653)  \\

    &100 & 1.0839 (0.0519)  & 1.0987 (0.0530)  \\

    &130 & 1.0852 (0.0440)  & 1.0967 (0.0445)  \\

    &160 & 1.0837 (0.0390)  & 1.0931 (0.0394)  \\

500 &10  & 1.0773 (0.2930)  & 1.2029 (0.2947)  & 252700 \\

    &40 & 1.0861 (0.0747)  & 1.1220 (0.0756)  \\

    &70 & 1.0832 (0.0482)  & 1.1040 (0.0482)  \\

    &100 & 1.0837 (0.0366)  & 1.0987 (0.0367)  \\

    &130 & 1.0841 (0.0311)  & 1.0955 (0.0313)  \\

    &160 & 1.0840 (0.0272)  & 1.0932 (0.0273)  \\

1000 &10  & 1.0846 (0.2868)  & 1.2045 (0.2822)  & 505700 \\

    &40 & 1.0843 (0.0675)  & 1.1209 (0.0673)  \\

    &70 & 1.0845 (0.0407)  & 1.1054 (0.0405)  \\

    &100 & 1.0843 (0.0312)  & 1.0991 (0.0309)  \\

    &130 & 1.0844 (0.0258)  & 1.0957 (0.0256)  \\

    &160 & 1.0840 (0.0217)  & 1.0933 (0.0215)  \\
\hline
 \end{tabular*}
\end{center}
\end{table}

\section{Concluding remarks}\label{sec:5}
%In recently statistical research, testing hypothesis has become
%increasingly popular in high dimensional data.
In this paper, we
propose a new test for $k$ sample BF problem and provide the
theoretical results and numerical studies.
The new test procedure is
modified from $T_{S}$ in \cite{Schott:2007} under weaker conditions than those in \cite{Schott:2007}
and the proposed test has the same
asymptotic properties.
The theoretical results illustrate that our proposed test
has the same asymptotic power as that of $T_{H}$ for the case of balanced sample sizes.
The theoretical and numerical studies in this paper further show that our proposed
test can control the nominal level and  has larger powers than
those of $T_{H}$ in many cases of unbalanced sample sizes.
It is expected that the proposed test can detect the $H_1$ more efficiently than $T_H$ when
sample sizes are unbalanced.
%To summarize, the proposed
%test improves the existing test $T_{H}$ in power.

\section*{Acknowledgments}
Cao's research is supported by the National Natural Science
Foundation of China (No.~11526070, 11601008), Anhui Provincial
Natural Science Foundation (No.\ 1508085QA11) and Doctor Startup
Foundation of Anhui Normal University (No.~2016XJJ101), which
facilitated the research visit of the first author to University of
Maryland Baltimore County, USA. He's research is supported by the
National Natural Science Foundation of China (No.~11201005).

\appendix
\section*{Appendix}\label{Appendix}

\section{\noindent{\bf{Proof of Lemma \CG{3.1}}}}
\indent We only need to prove
${\rm{tr}}(\bm{S}_{ln_{l1}}\bm{S}_{ln_{l2}})/{\rm{tr}}(\bm{\Sigma}_{l}^{2})\overset{pr}\longrightarrow
1$ because the proof of ratio-consistent estimator of
${\rm{tr}}(\bm{\Sigma}_{l}\bm{\Sigma}_{s})$ is similar.

Note that
\begin{align*}
{\rm{tr}}(\bm{S}_{ln_{l1}}\bm{S}_{ln_{l2}})=\frac{1}{(n_{l1}-1)(n_{l2}-1)}\sum\limits_{i=1}^{n_{l1}}(\bm{Z}_{li}-\overline{\bm{Z}}_{ln_{l1}})^{\trans}\bm{\Gamma}_{l}^{\trans}\bm{U}\bm{\Gamma}_{l}(\bm{Z}_{li}-\overline{\bm{Z}}_{ln_{l1}})
\end{align*}
where
$\bm{U}:=\sum\limits_{j=n_{l1}+1}^{n_{l}}(\bm{X}_{lj}-\overline{\bm{X}}_{ln_{l2}})(\bm{X}_{lj}-\overline{\bm{X}}_{ln_{l2}})^{\trans}$
and $\overline{\bm{Z}}_{ln_{l1}}$ is the sample mean of
$\bm{Z}_{l1},\ldots,\bm{Z}_{ln_{l1}}$.

Thus, we have
\begin{align}\label{eq5}
{\rm{E}}\Big\{{\rm{tr}}(\bm{S}_{ln_{l1}}\bm{S}_{ln_{l2}})\Big\}={\rm{tr}}(\bm{\Sigma}_{l}^{2}).
\end{align}

In order to prove the conclusion of Lemma \CG{3.1}, it is sufficient
to prove
\begin{align}\label{eq6}
{\rm{Var}}\Big\{{\rm{tr}}(\bm{S}_{ln_{l1}}\bm{S}_{ln_{l2}})\Big\}=o\Big\{{\rm{tr}}^{2}(\bm{\Sigma}_{l}^{2})\Big\}.
\end{align}

It is easy to get
\begin{align*}
{\rm{Var}}\Big\{{\rm{tr}}(\bm{S}_{ln_{l1}}\bm{S}_{ln_{l2}})\Big\}&={\rm{Var}}\Big\{{\rm{E}}[{\rm{tr}}(\bm{S}_{ln_{l1}}\bm{S}_{ln_{l2}})|\bm{U}]\Big\}+{\rm{E}}\Big\{{\rm{Var}}[{\rm{tr}}(\bm{S}_{ln_{l1}}\bm{S}_{ln_{l2}})|\bm{U}]\Big\}\\
&=:\frac{1}{(n_{l1}-1)^{2}(n_{l2}-1)^{2}}(\mbox{I}+\mbox{II}).
\end{align*}

Furthermore, we have
\begin{align}\label{eq7}
\mbox{I}&=(n_{l1}-1)^{2}{\rm{Var}}\left\{\sum\limits_{j=n_{l1}+1}^{n_{l}}(\bm{Z}_{lj}-\overline{\bm{Z}}_{ln_{l2}})^{\trans}\bm{\Gamma}_{l}^{\trans}\bm{\Sigma}_{l}\bm{\Gamma}_{l}(\bm{Z}_{lj}-\overline{\bm{Z}}_{ln_{l2}})\right\}\nonumber\\
&=(n_{l1}-1)^{2}\left\{\frac{\gamma(n_{l2}-1)^{2}}{n_{l2}}{\rm{tr}}[\bm{\Gamma}_{l}^{\trans}\bm{\Sigma}_{l}{\bm{\Gamma}_{l}}{\rm{diag}}(\bm{\Gamma}_{l}^{\trans}\bm{\Sigma}_{l}{\bm{\Gamma}_{l}})]+2(n_{l2}-1){\rm{tr}}(\bm{\Sigma}_{l}^{4})\right\}\nonumber\\
&{\leq}(n_{l1}-1)^{2}\left\{\frac{\gamma(n_{l2}-1)^{2}}{n_{l2}}{\rm{tr}}^{2}(\bm{\Sigma}_{l}^{2})+2(n_{l2}-1){\rm{tr}}(\bm{\Sigma}_{l}^{4})\right\}\nonumber\\
&=O(n^{3}){\rm{tr}}^{2}\left(\sum\limits_{i=1}^{k}\bm{\Sigma}_{i}\right)^{2}.
\end{align}

\begin{align}\label{eq8}
\mbox{II}&={\rm{E}}\left\{\frac{\gamma(n_{l1}-1)^{2}}{n_{l1}}{\rm{tr}}[\bm{\Gamma}_{l}^{\trans}\bm{U}{\bm{\Gamma}_{l}}{\rm{diag}}(\bm{\Gamma}_{l}^{\trans}\bm{U}{\bm{\Gamma}_{l}})]+2(n_{l1}-1){\rm{tr}}(\bm{\Gamma}_{l}^{\trans}\bm{U}{\bm{\Gamma}_{l}})^{2}\right\}\nonumber\\
&{\leq}\frac{(n_{l1}-1)(n_{l2}-1)[\gamma(n_{l2}-1)(2n_{l1}+{\gamma}n_{l1}-\gamma+n_{l1}n_{l2}-n_{l2})+2n_{l1}n_{l2}]}{n_{l1}n_{l2}}{\rm{tr}}^{2}(\bm{\Sigma}_{l}^{2})\nonumber\\
&\quad+\frac{2(n_{l1}-1)(n_{l2}-1)(n_{l1}n_{l2}+\gamma(n_{l1}-1))}{n_{l1}}{\rm{tr}}(\bm{\Sigma}_{l}^{4})\nonumber\\
&=O(n^{3}){\rm{tr}}^{2}\left(\sum\limits_{i=1}^{k}\bm{\Sigma}_{i}\right)^{2}.
\end{align}

Equations (\ref{eq7}) and (\ref{eq8}) show that equation (\ref{eq6})
holds. Thus, the proof of lemma is completed.

For convenience, let
$\bm{C}_{j+\begin{smallmatrix}\sum\limits_{i=1}^{l-1}\end{smallmatrix}n_{i}}=\bm{X}_{lj}$
for $j=1,\ldots,n_{l}$ with $\sum\limits_{i=1}^{0}n_{i}=0$, and
\begin{numcases}
{\eta_{ij}=}
\frac{n-n_{l}}{n(n_{l}-1)}\bm{C}_{i}^{\prime}\bm{C}_{j}, &$i,j\in\Lambda_{l}$, $l=1,\ldots,k$, \nonumber\\
-\frac{1}{n}\bm{C}_{i}^{\prime}\bm{C}_{j},
&$(i,j)\in\Lambda_{l}\times \Lambda_{s}$, $1\leq
l<s\leq{k}$,\nonumber
\end{numcases}
where
$\Lambda_{l}=\left\{\sum\limits_{j=1}^{l-1}n_{j}+1,\sum\limits_{j=1}^{l-1}n_{j}+2,\cdots,\sum\limits_{j=1}^{l}n_{j}\right\}$
with $l=1,\ldots,k$.

Define further $D_{j}=\sum\limits_{i=1}^{j-1}\eta_{ij}$,
$F_{m}=\sum\limits_{j=2}^{m}D_{j}$ and
$\mathscr{C}_{m}=\sigma(\bm{C}_{1},\ldots,\bm{C}_{m})$ which is the
$\sigma$-field generated by $\bm{C}_{1},\ldots,\bm{C}_{m}$.

Combining $\bm{C}_{j}$, $\eta_{ij}$ with $D_{j}$, we have
\begin{align*}
T&=2\sum_{l=1}^{k}\sum_{j=2+\sum\limits_{m=1}^{l-1}n_{m}}^{\sum\limits_{m=1}^{l}n_{m}}\sum_{i=1+\sum\limits_{m=1}^{l-1}n_{m}}^{j-1}\eta_{ij}
+2\sum_{l=1}^{k-1}\sum_{s=l+1}^{k}\sum_{i=1+\sum\limits_{m=1}^{l-1}n_{m}}^{\sum\limits_{m=1}^{l}n_{m}}\sum_{j=1+\sum\limits_{m=1}^{s-1}n_{m}}^{\sum\limits_{m=1}^{s}n_{m}}\eta_{ij}\\
&=2\sum\limits_{j=2}^{n}\sum\limits_{i=1}^{j-1}\eta_{ij}=\sum\limits_{j=2}^{n}D_{j}.
\end{align*}

In order to prove our main results, the following lemmas are firstly
given. Without loss of generality, we here assume that
$\bm{\mu}_{1}=\cdots=\bm{\mu}_{k}=0$ in the process of proving
lemmas.

\begin{lemma}\label{lmA.1}
$\{D_{j},\mathscr{C}_{j}\}_{j=1}^{n}$ is the sequence of zero mean
and a square integrable martingale for all $n$.
\end{lemma}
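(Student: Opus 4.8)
The plan is to verify directly the defining properties of a square-integrable martingale difference sequence for $\{D_j\}$ adapted to the filtration $\{\mathscr{C}_j\}$ — adaptedness, square integrability, and the vanishing conditional expectation $\mathrm{E}(D_j\mid\mathscr{C}_{j-1})=0$ — from which the martingale property of the partial sums $F_m=\sum_{j=2}^{m}D_j$ (and in particular of $T=F_n$) follows at once, together with square integrability of $F_m$ since the cross-covariances $\mathrm{E}(D_iD_j)$ vanish for $i\neq j$.

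First I would note that $D_1=0$ (an empty sum) and that, for $j\geq 2$, $D_j=\sum_{i=1}^{j-1}\eta_{ij}$ where each $\eta_{ij}$ is a bilinear form $c_{ij}\,\bm{C}_i\trans\bm{C}_j$ in the relabelled vectors $\bm{C}_i,\bm{C}_j$, with deterministic coefficient $c_{ij}$ equal to $\tfrac{n-n_l}{n(n_l-1)}$ or $-\tfrac1n$ according to whether $i,j$ lie in the same group. Since $i\le j-1<j$, every $\eta_{ij}$ occurring in $D_j$ is $\mathscr{C}_j=\sigma(\bm{C}_1,\dots,\bm{C}_j)$–measurable, so $\{D_j\}$ is adapted. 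For square integrability I would invoke the factor model \eqref{eq2}: under the working assumption $\bm{\mu}_1=\cdots=\bm{\mu}_k=0$ we have $\bm{C}_j=\bm{\Gamma}_l\bm{Z}_{lj}$, so $\eta_{ij}$ is a quadratic expression in the entries of $\bm{Z}_{li}$ and $\bm{Z}_{lj}$; expanding $D_j^2=\sum_{i,i'=1}^{j-1}\eta_{ij}\eta_{i'j}$, every summand involves moments of the $z$-variables of order at most four, which are finite by the assumption $\mathrm{E}(z_{lij}^4)=3+\gamma_l<\infty$, and therefore $\mathrm{E}(D_j^2)<\infty$ for each $j$.

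Next I would establish the key identity. For each fixed $i\le j-1$, conditioning on $\mathscr{C}_{j-1}$ fixes $\bm{C}_i$, while $\bm{C}_j$ is independent of $\mathscr{C}_{j-1}$ because the relabelled sample $\bm{C}_1,\dots,\bm{C}_n$ is mutually independent (i.i.d.\ within each group and independent across groups). Hence
\begin{align*}
\mathrm{E}(\eta_{ij}\mid\mathscr{C}_{j-1})=c_{ij}\,\bm{C}_i\trans\,\mathrm{E}(\bm{C}_j\mid\mathscr{C}_{j-1})=c_{ij}\,\bm{C}_i\trans\,\mathrm{E}(\bm{C}_j)=0,
\end{align*}
since $\mathrm{E}(\bm{C}_j)=\bm{\Gamma}_l\,\mathrm{E}(\bm{Z}_{lj})=0$. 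Summing over $i=1,\dots,j-1$ gives $\mathrm{E}(D_j\mid\mathscr{C}_{j-1})=0$, and taking expectations yields $\mathrm{E}(D_j)=0$. Consequently $F_m=\sum_{j=2}^{m}D_j$ obeys $\mathrm{E}(F_m\mid\mathscr{C}_{m-1})=F_{m-1}$, so $\{F_m,\mathscr{C}_m\}$ is a zero-mean martingale; it is square integrable because $\mathrm{E}(D_iD_j)=\mathrm{E}\{D_i\,\mathrm{E}(D_j\mid\mathscr{C}_{j-1})\}=0$ for $i<j$ forces $\mathrm{E}(F_m^2)=\sum_{j=2}^{m}\mathrm{E}(D_j^2)<\infty$.

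There is no serious obstacle here: the only points that need a little care are recording that the relabelled sample is genuinely independent across all indices (so that $\bm{C}_j$ is independent of $\mathscr{C}_{j-1}$), and using the fourth-moment condition of the factor model to obtain $\mathrm{E}(D_j^2)<\infty$ rather than merely $\mathrm{E}|D_j|<\infty$. This lemma is the structural input that allows the martingale central limit theorem to be applied to $T=\sum_{j=2}^{n}D_j$ in the proofs of Theorems \ref{theo3.1} and \ref{theo3.2}.
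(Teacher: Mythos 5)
Your proposal is correct and follows essentially the same route as the paper's proof: the paper likewise works under the convention $\bm{\mu}_{1}=\cdots=\bm{\mu}_{k}=0$, asserts adaptedness and square integrability of the $D_j$, and reduces the martingale property of $F_m$ to ${\rm E}(D_{q}\mid\mathscr{C}_{j})=0$ for $q>j$, which is exactly the conditional-expectation identity you verify via the independence of $\bm{C}_{j}$ from $\mathscr{C}_{j-1}$. You simply spell out the details (measurability of $\eta_{ij}$, finiteness of ${\rm E}(D_j^2)$ from the fourth-moment condition) that the paper states without elaboration.
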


\begin{proof}
Firstly, we have $\mathscr{C}_{1}\subseteq \cdots \subseteq
\mathscr{C}_{n}$, and $\{D_{j},\mathscr{C}_{j}\}_{j=1}^{n}$ is a
square integrable sequence with zero mean. Hence, we only need to
prove ${\rm{E}}(F_{m}|\mathscr{C}_{j})=F_{j}$ for $\forall m\geq j$.

Notice that ${\rm{E}}(D_{q}|\mathscr{C}_{j})=0$ for $\forall q > j$.
Therefore,
\begin{align*}
{\rm{E}}(F_{m}|\mathscr{C}_{j})&=F_{j}+{\rm{E}}(\sum\limits_{q=j+1}^{m}D_{q}|\mathscr{C}_{j})\\
&=F_{j}+\sum\limits_{q=j+1}^{m}{\rm{E}}(D_{q}|\mathscr{C}_{j})=F_{j}.
\end{align*}
Lastly, this completes the proof of Lemma \CG{A.1}.
\end{proof}

\begin{lemma}\label{lmA.2}
Under the assumptions of (\CG{A1}) and (\CG{A3}), as $n,
p\to\infty$, it gets
\begin{align*}
\sum\limits_{j=2}^{n}{\rm{E}}(D_{j}^{2}|\mathscr{C}_{j-1})\overset{pr}\longrightarrow
\frac{1}{4}\sigma_{T}^{2}.
\end{align*}
\end{lemma}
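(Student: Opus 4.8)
The plan is to verify the claimed convergence in probability by the standard "first and second moment" route: show that $\mathrm{E}\{\sum_{j=2}^n \mathrm{E}(D_j^2 \mid \mathscr{C}_{j-1})\} \to \tfrac14 \sigma_T^2$ (in the normalized sense $\sigma_T^{-2}\,\mathrm{E}\{\cdots\} \to \tfrac14$) and then that the variance of $\sum_{j=2}^n \mathrm{E}(D_j^2 \mid \mathscr{C}_{j-1})$ is $o(\sigma_T^4)$. First I would write $D_j = \sum_{i=1}^{j-1}\eta_{ij}$, so that
\[
\mathrm{E}(D_j^2 \mid \mathscr{C}_{j-1}) = \sum_{i=1}^{j-1}\sum_{i'=1}^{j-1} \mathrm{E}(\eta_{ij}\eta_{i'j}\mid \mathscr{C}_{j-1}),
\]
and since $\bm C_j$ is independent of $\mathscr{C}_{j-1}$ with mean zero (recall we set all $\bm\mu_l = 0$ for the lemma) and covariance $\bm\Sigma_{l(j)}$, where $l(j)$ is the group containing index $j$, each term $\mathrm{E}(\eta_{ij}\eta_{i'j}\mid\mathscr{C}_{j-1})$ reduces to $c_{ij}c_{i'j}\,\bm C_i\trans \bm\Sigma_{l(j)} \bm C_{i'}$ for the appropriate coefficients $c_{ij} \in \{(n-n_l)/(n(n_l-1)), -1/n\}$ dictated by which groups $i$ and $j$ lie in. Summing over $j$ and taking expectations, the diagonal terms $i=i'$ dominate: $\mathrm{E}(\bm C_i\trans \bm\Sigma_{l(j)}\bm C_i) = \mathrm{tr}(\bm\Sigma_{l(i)}\bm\Sigma_{l(j)})$, and careful bookkeeping of the coefficient $c_{ij}^2$ over the blocks $\Lambda_l \times \Lambda_s$ should reproduce exactly
\[
\frac{1}{2n^2}\Big\{\sum_{l=1}^k \frac{n_l(n-n_l)^2}{n_l-1}\mathrm{tr}(\bm\Sigma_l^2) + \sum_{l\neq s}^k n_l n_s \mathrm{tr}(\bm\Sigma_l\bm\Sigma_s)\Big\} = \tfrac14\sigma_T^2,
\]
while the off-diagonal terms $i \neq i'$ have zero expectation. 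This gives the first-moment identity.

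Next I would bound the variance. Writing $W_n := \sum_{j=2}^n \mathrm{E}(D_j^2\mid\mathscr{C}_{j-1})$, expand $\mathrm{Var}(W_n)$ as a sum of covariances of the bilinear-in-$\bm C$ terms $c_{ij}c_{i'j}\,\bm C_i\trans\bm\Sigma_{l(j)}\bm C_{i'}$ over two indices $j, j'$. The nonzero contributions come from configurations in which the $\bm C$-indices match up in pairs; by the factor-model moment conditions (the fourth-moment bound $\mathrm{E}(z_{lij}^4)=3+\gamma_l<\infty$) every such term is controlled by traces of products of at most four covariance matrices, i.e. by quantities of the form $\mathrm{tr}(\bm\Sigma_l\bm\Sigma_s\bm\Sigma_i\bm\Sigma_j)$ and $\mathrm{tr}(\bm\Sigma_l\bm\Sigma_s)\mathrm{tr}(\bm\Sigma_i\bm\Sigma_j)$, times polynomial-in-$n$ coefficients. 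Assumption (A3) says $\mathrm{tr}(\bm\Sigma_l\bm\Sigma_s\bm\Sigma_i\bm\Sigma_j) = o\{\mathrm{tr}^2((\sum_i\bm\Sigma_i)^2)\}$, and $\sigma_T^2$ is of order $n^{-2}\cdot n^3 \cdot \mathrm{tr}((\sum_i\bm\Sigma_i)^2) \asymp n\,\mathrm{tr}((\sum_i\bm\Sigma_i)^2)$ by (A1); matching the powers of $n$ against the number of free summation indices in each term should yield $\mathrm{Var}(W_n) = o(\sigma_T^4)$, hence $W_n/\sigma_T^2 \overset{pr}\to \tfrac14$ by Chebyshev.

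The main obstacle I anticipate is the combinatorial bookkeeping in the variance step: one must enumerate the index-coincidence patterns among the four $\bm C$-indices (two from $D_j$, two from $D_{j'}$) across the block structure $\Lambda_1,\dots,\Lambda_k$, keep track of which coefficient $c_{\cdot\cdot}$ (order $n^{-1}$) attaches to each, count how many of the indices remain free to be summed (each contributing a factor up to $n$), and then check that in every case the resulting bound is genuinely $o(\sigma_T^4)$ after invoking (A3) — the delicate cases being those where three of the four covariance factors collapse onto a single trace, since there the power of $n$ is largest. The cross-group coefficients being only $-1/n$ rather than $O(1)$ actually helps, as does the fact that within-group coefficients $(n-n_l)/(n(n_l-1))$ are $O(1/n)$ as well. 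I would organize this as: (1) reduce $W_n$ to a quadratic form in the $\bm C_i$'s plus its conditional-expectation structure; (2) compute $\mathrm{E}(W_n)$ exactly and identify it with $\tfrac14\sigma_T^2$; (3) bound $\mathrm{Var}(W_n)$ term-by-term using (A1) and (A3); (4) conclude via Chebyshev's inequality. Routine fourth-moment formulas for quadratic forms in independent vectors (of the type already used in the proof of Lemma 3.1, equations \eqref{eq7}–\eqref{eq8}) would be cited rather than rederived.
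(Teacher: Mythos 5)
Your overall architecture is the same as the paper's: compute $\mathrm{E}\{\sum_{j}\mathrm{E}(D_j^2\mid\mathscr{C}_{j-1})\}$ exactly (your block-by-block bookkeeping does give $\tfrac14\sigma_T^2$), show the variance of $W_n:=\sum_{j=2}^n\mathrm{E}(D_j^2\mid\mathscr{C}_{j-1})$ is $o(\sigma_T^4)$, and finish by Chebyshev; the paper does precisely this, grouping the conditional variances into $G_l=\sum_{j\in\Lambda_l}\mathrm{E}(D_j^2\mid\mathscr{C}_{j-1})$, computing $\mathrm{E}(G_l)$ and $\mathrm{E}(G_l^2)$ explicitly and using Cauchy--Schwarz for the cross-group covariances. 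The first-moment step of your proposal is correct.

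The variance step, however, has a genuine gap, and it originates in a wrong order assessment of $\sigma_T^2$. Under (A1), $n_l(n-n_l)^2/\{n^2(n_l-1)\}=O(1)$ and $n_ln_s/n^2=O(1)$, so $\sigma_T^2\asymp\mathrm{tr}\{(\sum_{i}\bm\Sigma_i)^2\}$, not $n\,\mathrm{tr}\{(\sum_i\bm\Sigma_i)^2\}$ as you state (you dropped the $(n_l-1)$ in the denominator; note also that the denominator of the power function in Theorem 3.2 carries no factor of $n$). Hence the target $o(\sigma_T^4)=o(\mathrm{tr}^2\{(\sum_i\bm\Sigma_i)^2\})$ is two powers of $n$ tighter than you assume. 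This matters exactly where you wave at the bookkeeping: among the quantities you list as controlling the covariance terms is $\mathrm{tr}(\bm\Sigma_l\bm\Sigma_s)\mathrm{tr}(\bm\Sigma_i\bm\Sigma_j)$, which is of the \emph{same} order as $\sigma_T^4$; if such a product-of-traces term survived with an $O(1)$ coefficient, your (inflated) power-counting criterion would wrongly certify it as negligible, and the lemma would not follow. What actually saves the argument, and what your sketch never states, is that these terms do not survive: writing $\mathrm{Var}(W_n)=\sum_{j,j'}\sum_{i,i'<j}\sum_{a,b<j'}c_{ij}c_{i'j}c_{aj'}c_{bj'}\,\mathrm{Cov}\bigl(\bm C_i\trans\bm\Sigma_{l(j)}\bm C_{i'},\,\bm C_a\trans\bm\Sigma_{l(j')}\bm C_b\bigr)$, every configuration in which $\{i,i'\}$ and $\{a,b\}$ are disjoint, or share exactly one index, has covariance zero because the $\bm C_i$ are independent with mean zero; only $\{i,i'\}=\{a,b\}$ survives, yielding quartic traces $\mathrm{tr}(\bm\Sigma_\cdot\bm\Sigma_\cdot\bm\Sigma_\cdot\bm\Sigma_\cdot)$ and diagonal fourth-moment terms with at most four free summation indices and coefficient weight $n^{-4}$, i.e.\ $O(1)$ or $O(n^{-1})$ overall, which (A3) then renders $o(\sigma_T^4)$. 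Equivalently, the paper handles this by computing $\mathrm{E}(G_l^2)$ exactly (its terms III--VII) so that the order-$\mathrm{tr}^2$ pieces cancel against $\mathrm{E}(G_l)^2$. Without one of these two ingredients --- the vanishing of disjoint-index covariances or the explicit cancellation --- the power counting you propose does not establish $\mathrm{Var}(W_n)=o(\sigma_T^4)$, so you should repair the order of $\sigma_T^2$ and make the covariance-vanishing argument explicit.
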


\begin{proof}
For $\forall$$j\in\Lambda_{l}$,
\begin{align}\label{eq9}
{\rm{E}}(D_{j}^{2}|\mathscr{C}_{j-1})&={\rm{E}}\left\{\left(\sum\limits_{i=1}^{j-1}\eta_{ij}\right)^{2}\Big|\mathscr{C}_{j-1}\right\}\nonumber\\
&=n^{-2}{\rm{E}}\left\{\bm{H}_{j-1}^{\trans}\bm{C}_{j}\bm{C}_{j}^{\trans}\bm{H}_{j-1}\big|\mathscr{C}_{j-1}\right\}=n^{-2}\bm{H}_{j-1}^{\trans}{\bm{\Sigma}_{l}}\bm{H}_{j-1},
\end{align}
where
$\bm{H}_{j-1}=\sum\limits_{i=1+\begin{smallmatrix}\sum\limits_{s=1}^{l-1}\end{smallmatrix}n_{s}}^{j-1}\frac{n-n_{l}}{n_{l}-1}\bm{C}_{i}-\sum\limits_{i=1}^{\begin{smallmatrix}\sum\limits_{s=1}^{l-1}\end{smallmatrix}n_{s}}\bm{C}_{i}$.

If we eefine
$G_{l}=\sum\limits_{j\in\Lambda_{l}}{\rm{E}}(D_{j}^{2}|\mathscr{C}_{j-1})$,
then we get from (\ref{eq9})
\begin{align}\label{eq10}
{\rm{E}}(G_{l})&=n^{-2}\sum\limits_{j\in\Lambda_{l}}\left\{\frac{(n-n_{l})^{2}}{(n_{l}-1)^{2}}\left(j-1-\sum\limits_{s=1}^{l-1}n_{s}\right){\rm{tr}}(\bm{\Sigma}_{l}^{2})+\sum\limits_{s=1}^{l-1}n_{s}{\rm{tr}}(\bm{\Sigma}_{l}\bm{\Sigma}_{s})\right\}\nonumber\\
&=n^{-2}\left\{\frac{n_{l}(n-n_{l})^{2}}{2(n_{l}-1)}{\rm{tr}}(\bm{\Sigma}_{l}^{2})+n_{l}\sum\limits_{s=1}^{l-1}n_{s}{\rm{tr}}(\bm{\Sigma}_{l}\bm{\Sigma}_{s})\right\}.
\end{align}
Therefore,
\begin{align*}
{\rm{E}}\left\{\sum\limits_{j=2}^{n}{\rm{E}}(D_{j}^{2}|\mathscr{C}_{j-1})\right\}=\sum\limits_{l=1}^{k}{\rm{E}}(G_{l})=\frac{1}{4}\sigma_{T}^{2}.
\end{align*}

On the other hand, we have
\begin{align}\label{eq11}
{\rm{E}}(G_{l}^{2})&={\rm{E}}\left\{\sum\limits_{j\in\Lambda_{l}}{\rm{E}}(D_{j}^{2}|\mathscr{C}_{j-1})\right\}^{2}=n^{-4}{\rm{E}}\left\{\sum\limits_{j\in\Lambda_{l}}\bm{H}_{j-1}^{\trans}{\bm{\Sigma}_{l}}\bm{H}_{j-1}\right\}^{2}\nonumber\\
&=n^{-4}{\rm{E}}\left\{\sum\limits_{j\in\Lambda_{l}}\left(\bm{H}_{j-1}^{\trans}{\bm{\Sigma}_{l}}\bm{H}_{j-1}\right)^{2}+\sum\limits_{j\neq{h}\in\Lambda_{l}}\bm{H}_{j-1}^{\trans}{\bm{\Sigma}_{l}}\bm{H}_{j-1}\bm{H}_{h-1}^{\trans}{\bm{\Sigma}_{l}}\bm{H}_{h-1}\right\}\nonumber\\
&=:n^{-4}(\mbox{III}+\mbox{IV}).
\end{align}

Further calculations result in
\begin{align}\label{eq12}
\mbox{III}&=\frac{{\gamma}n_{l}(n-n_{l})^{4}}{2(n_{l}-1)^{3}}{\rm{tr}}\left\{\bm{\Gamma}_{l}^{\trans}\bm{\Sigma}_{l}\bm{\Gamma}_{l}{\rm{diag}}(\bm{\Gamma}_{l}^{\trans}\bm{\Sigma}_{l}\bm{\Gamma}_{l})\right\}+\frac{n_{l}(2n_{l}-1)(n-n_{l})^{4}}{6(n_{l}-1)^{3}}\left\{2{\rm{tr}}(\bm{\Sigma}_{l}^{4})+{\rm{tr}}^{2}(\bm{\Sigma}_{l}^{2})\right\}\nonumber\\
&\quad+{\gamma}n_{l}\sum\limits_{s=1}^{l-1}n_{s}{\rm{tr}}\left\{\bm{\Gamma}_{s}^{\trans}\bm{\Sigma}_{l}\bm{\Gamma}_{s}{\rm{diag}}(\bm{\Gamma}_{s}^{\trans}\bm{\Sigma}_{l}\bm{\Gamma}_{s})\right\}+n_{l}\sum\limits_{s=1}^{l-1}n_{s}^{2}\left\{2{\rm{tr}}(\bm{\Sigma}_{l}\bm{\Sigma}_{s})^{2}+{\rm{tr}}^{2}(\bm{\Sigma}_{l}\bm{\Sigma}_{s})\right\}\nonumber\\
&\quad+n_{l}\sum\limits_{s{\neq}h}^{l-1}n_{s}n_{h}{\rm{tr}}(\bm{\Sigma}_{l}\bm{\Sigma}_{s}){\rm{tr}}(\bm{\Sigma}_{l}\bm{\Sigma}_{h})+2n_{l}\sum\limits_{s{\neq}h}^{l-1}n_{s}n_{h}{\rm{tr}}(\bm{\Sigma}_{l}\bm{\Sigma}_{s}\bm{\Sigma}_{l}\rm{\Sigma}_{h})\nonumber\\
&\quad+\frac{n_{l}(n-n_{l})^{2}}{n_{l}-1}\sum\limits_{s=1}^{l-1}n_{s}{\rm{tr}}(\bm{\Sigma}_{l}\bm{\Sigma}_{s}){\rm{tr}}(\bm{\Sigma}_{l}^{2})+\frac{2n_{l}(n-n_{l})^{2}}{n_{l}-1}\sum\limits_{s=1}^{l-1}n_{s}{\rm{tr}}(\bm{\Sigma}_{l}^{3}\bm{\Sigma}_{s}),
\end{align}
\begin{align}\label{eq13}
\mbox{IV}&=2\sum\limits_{j<h\in\Lambda_{l}}{\rm{E}}\left(\bm{H}_{j-1}^{\trans}{\bm{\Sigma}_{l}}\bm{H}_{j-1}\bm{H}_{h-1}^{\trans}{\bm{\Sigma}_{l}}\bm{H}_{h-1}\right)\nonumber\\
&=2\sum\limits_{j<h\in\Lambda_{l}}{\rm{E}}\left(\bm{H}_{j-1}^{\trans}{\bm{\Sigma}_{l}}\bm{H}_{j-1}\right)^{2}+4\sum\limits_{j<h\in\Lambda_{l}}{\rm{E}}\left\{\bm{H}_{j-1}^{\trans}{\bm{\Sigma}_{l}}\bm{H}_{j-1}\bm{H}_{j-1}^{\trans}\bm{\Sigma}_{l}\left(\sum\limits_{i=j}^{h-1}\frac{n-n_{l}}{n_{l}-1}\bm{C}_{i}\right)\right\}\nonumber\\
&\quad+2\sum\limits_{j<h\in\Lambda_{l}}{\rm{E}}\left\{\bm{H}_{j-1}^{\trans}{\bm{\Sigma}_{l}}\bm{H}_{j-1}\left(\sum\limits_{i=j}^{h-1}\frac{n-n_{l}}{n_{l}-1}\bm{C}_{i}\right)^{\trans}{\bm{\Sigma}_{l}}\left(\sum\limits_{i=j}^{h-1}\frac{n-n_{l}}{n_{l}-1}\bm{C}_{i}\right)\right\}\nonumber\\
&=:\mbox{V}+\mbox{VI}+\mbox{VII},
\end{align}
where
\begin{align}\label{eq14}
\mbox{V}&=\frac{{\gamma}n_{l}(n_{l}-2)(n-n_{l})^{4}}{3(n_{l}-1)^{3}}{\rm{tr}}\left\{\bm{\Gamma}_{l}^{\trans}\bm{\Sigma}_{l}\bm{\Gamma}_{l}{\rm{diag}}(\bm{\Gamma}_{l}^{\trans}\bm{\Sigma}_{l}\bm{\Gamma}_{l})\right\}+\frac{n_{l}(n_{l}-2)(n-n_{l})^{4}}{6(n_{l}-1)^{2}}\big\{2{\rm{tr}}(\bm{\Sigma}_{l}^{4})\nonumber\\
&\quad+{\rm{tr}}^{2}(\bm{\Sigma}_{l}^{2})\big\}+{\gamma}n_{l}(n_{l}-1)\sum\limits_{s=1}^{l-1}n_{s}{\rm{tr}}\left\{\bm{\Gamma}_{s}^{\trans}\bm{\Sigma}_{l}\bm{\Gamma}_{s}{\rm{diag}}(\bm{\Gamma}_{s}^{\trans}\bm{\Sigma}_{l}\bm{\Gamma}_{s})\right\}\nonumber\\
&\quad+n_{l}(n_{l}-1)\sum\limits_{s{\neq}h}^{l-1}n_{s}n_{h}{\rm{tr}}(\bm{\Sigma}_{l}\bm{\Sigma}_{s}){\rm{tr}}(\bm{\Sigma}_{l}\bm{\Sigma}_{h})+2n_{l}(n_{l}-1)\sum\limits_{s{\neq}h}^{l-1}n_{s}n_{h}{\rm{tr}}(\bm{\Sigma}_{l}\bm{\Sigma}_{s}\bm{\Sigma}_{l}\bm{\Sigma}_{h})\nonumber\\
&\quad+\frac{2n_{l}(n_{l}-2)(n-n_{l})^{2}}{3(n_{l}-1)}\sum\limits_{s=1}^{l-1}n_{s}{\rm{tr}}(\bm{\Sigma}_{l}\bm{\Sigma}_{s}){\rm{tr}}(\bm{\Sigma}_{l}^{2})+\frac{4n_{l}(n_{l}-2)(n-n_{l})^{2}}{3(n_{l}-1)}\sum\limits_{s=1}^{l-1}n_{s}{\rm{tr}}(\bm{\Sigma}_{l}^{3}\bm{\Sigma}_{s})\nonumber\\
&\quad+n_{l}(n_{l}-1)\sum\limits_{s=1}^{l-1}n_{s}^{2}\left\{2{\rm{tr}}(\bm{\Sigma}_{l}\bm{\Sigma}_{s})^{2}+{\rm{tr}}^{2}(\bm{\Sigma}_{l}\bm{\Sigma}_{s})\right\}
\end{align}
and
\begin{align}\label{eq15}
\mbox{VII}=\frac{n_{l}(n_{l}-2)(n_{l}+1)(n-n_{l})^{4}}{12(n_{l}-1)^{3}}{\rm{tr}}^{2}(\bm{\Sigma}_{l}^{2})+\frac{n_{l}(n_{l}+1)(n-n_{l})^{2}}{3(n_{l}-1)}\sum\limits_{s=1}^{l-1}n_{s}{\rm{tr}}(\bm{\Sigma}_{l}\bm{\Sigma}_{s}){\rm{tr}}(\bm{\Sigma}_{l}^{2}).
\end{align}

Thus by equations (\ref{eq10})-(\ref{eq15}) and $\mbox{VI}=0$, we
obtain
\begin{align*}
{\rm{Var}}(G_{l})&=\frac{{\gamma}n_{l}(2n_{l}-1)(n-n_{l})^{4}}{6n^{4}(n_{l}-1)^{3}}{\rm{tr}}\left\{\bm{\Gamma}_{l}^{\trans}\bm{\Sigma}_{l}\bm{\Gamma}_{l}{\rm{diag}}(\bm{\Gamma}_{l}^{\trans}\bm{\Sigma}_{l}\bm{\Gamma}_{l})\right\}\\
&\quad+\frac{{\gamma}n_{l}^{2}}{n^{4}}\sum\limits_{s=1}^{l-1}n_{s}{\rm{tr}}\left\{\bm{\Gamma}_{s}^{\trans}\bm{\Sigma}_{l}\bm{\Gamma}_{s}{\rm{diag}}(\bm{\Gamma}_{s}^{\trans}\bm{\Sigma}_{l}\bm{\Gamma}_{s})\right\}\\
&\quad+\frac{n_{l}(n_{l}^{2}-n_{l}+1)(n-n_{l})^{4}}{3n^{4}(n_{l}-1)^{3}}{\rm{tr}}(\bm{\Sigma}_{l}^{4})+\frac{2n_{l}^{2}}{n^{4}}\sum\limits_{s{\neq}h}^{l-1}n_{s}n_{h}{\rm{tr}}(\bm{\Sigma}_{l}\bm{\Sigma}_{s}\bm{\Sigma}_{l}\bm{\Sigma}_{h})\\
&\quad+\frac{2n_{l}(2n_{l}-1)(n-n_{l})^{2}}{3n^{4}(n_{l}-1)}\sum\limits_{s=1}^{l-1}n_{s}{\rm{tr}}(\bm{\Sigma}_{l}^{3}\bm{\Sigma}_{s})+\frac{2n_{l}^{2}}{n^{4}}\sum\limits_{s=1}^{l-1}n_{s}^{2}{\rm{tr}}(\bm{\Sigma}_{l}\bm{\Sigma}_{s})^{2}.
\end{align*}

Note that
\begin{align}\label{eq16}
{\rm{tr}}\left\{\bm{\Gamma}_{s}^{\trans}\bm{\Sigma}_{l}\bm{\Gamma}_{s}{\rm{diag}}(\bm{\Gamma}_{s}^{\trans}\bm{\Sigma}_{l}\bm{\Gamma}_{s})\right\}{\leq}{\rm{tr}}^{2}(\bm{\Sigma}_{l}\bm{\Sigma}_{s})\leq{{\rm{tr}}^{2}\left(\sum\limits_{i=1}^{k}\bm{\Sigma}_{i}\right)^{2}}.
\end{align}

So, it easily gets, from (\CG{A1}), (\CG{A3}) and equation
(\ref{eq16}),
\begin{align*}
{\rm{Var}}(G_{l})=o(\sigma_{T}^{4}).
\end{align*}

Lastly, via Cauchy-Schwarz inequality
${\rm{Cov}}^{2}(G_{l},G_{s}){\leq}{\rm{Var}}(G_{l}){\rm{Var}}(G_{s})$,
we have
\begin{align*}
{\rm{Var}}\left\{\sum\limits_{j=2}^{n}{\rm{E}}(D_{j}^{2}|\mathscr{C}_{j-1})\right\}={\rm{Var}}\left(\sum\limits_{l=1}^{k}G_{l}\right)=o(\sigma_{T}^{4}).
\end{align*}
This completes the proof of lemma \CG{A.2}.
\end{proof}

Now attentions are paid to proving the Lindeberg condition.

\begin{lemma}\label{lm4.3}
Under the assumptions of (\CG{A1}) and (\CG{A3}), for
$\forall\epsilon > 0$, as $n, p\to\infty$, it takes
\begin{align*}
\sum\limits_{j=2}^{n}\sigma_{T}^{-2}{\rm{E}}\left\{D_{j}^{2}I(|D_{j}|>\epsilon\sigma_{T})|\mathscr{C}_{j-1}\right\}\overset{pr}\longrightarrow
0.
\end{align*}
\end{lemma}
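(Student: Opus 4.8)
The plan is to verify the Lindeberg condition by establishing the stronger Lyapunov-type bound $\sum_{j=2}^{n}{\rm E}(D_{j}^{4})=o(\sigma_{T}^{2})^{2}$, i.e. $\sum_{j=2}^{n}{\rm E}(D_{j}^{4})=o(\sigma_{T}^{4})$. This suffices: on the event $\{|D_{j}|>\epsilon\sigma_{T}\}$ one has $D_{j}^{2}\le\epsilon^{-2}\sigma_{T}^{-2}D_{j}^{4}$, so
\[
\sum_{j=2}^{n}\sigma_{T}^{-2}{\rm E}\{D_{j}^{2}I(|D_{j}|>\epsilon\sigma_{T})|\mathscr{C}_{j-1}\}\le\epsilon^{-2}\sigma_{T}^{-4}\sum_{j=2}^{n}{\rm E}(D_{j}^{4}|\mathscr{C}_{j-1}).
\]
Taking expectations, the right-hand side has mean $\epsilon^{-2}\sigma_{T}^{-4}\sum_{j}{\rm E}(D_{j}^{4})\to 0$, and since the left-hand side is nonnegative it converges to $0$ in $L^{1}$, hence in probability, which is the assertion of the lemma.

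To control ${\rm E}(D_{j}^{4})$, fix $j\in\Lambda_{l}$ and recall from the derivation of \eqref{eq9} that $D_{j}=n^{-1}\bm{C}_{j}^{\trans}\bm{H}_{j-1}=n^{-1}\bm{Z}_{lj}^{\trans}\bm{b}$, where $\bm{b}:=\bm{\Gamma}_{l}^{\trans}\bm{H}_{j-1}$ is $\mathscr{C}_{j-1}$-measurable and $\|\bm{b}\|^{2}=\bm{H}_{j-1}^{\trans}\bm{\Sigma}_{l}\bm{H}_{j-1}$. Since the coordinates of $\bm{Z}_{lj}$ are independent with mean $0$, variance $1$ and fourth moment $3+\gamma_{l}$, the fourth moment of the linear form satisfies ${\rm E}\{(\bm{Z}_{lj}^{\trans}\bm{b})^{4}|\mathscr{C}_{j-1}\}=3\|\bm{b}\|^{4}+\gamma_{l}\sum_{m}b_{m}^{4}\le C\|\bm{b}\|^{4}$ for a finite constant $C$ depending only on $k$ and the kurtoses $\gamma_{l}$. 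Hence ${\rm E}(D_{j}^{4}|\mathscr{C}_{j-1})\le Cn^{-4}(\bm{H}_{j-1}^{\trans}\bm{\Sigma}_{l}\bm{H}_{j-1})^{2}=C\{{\rm E}(D_{j}^{2}|\mathscr{C}_{j-1})\}^{2}$, and summing the expectations over $j\in\Lambda_{l}$ gives $\sum_{j\in\Lambda_{l}}{\rm E}(D_{j}^{4})\le Cn^{-4}\,{\rm III}$, where ${\rm III}=\sum_{j\in\Lambda_{l}}{\rm E}(\bm{H}_{j-1}^{\trans}\bm{\Sigma}_{l}\bm{H}_{j-1})^{2}$ is exactly the quantity whose explicit expansion is recorded in \eqref{eq12} in the proof of Lemma \CG{A.2}.

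It then remains to show $n^{-4}\,{\rm III}=o(\sigma_{T}^{4})$. Reading off \eqref{eq12}, every summand is a sample-size coefficient of order at most $n^{3}$ times a trace factor. The genuinely fourth-order trace factors — ${\rm tr}(\bm{\Sigma}_{l}^{4})$, ${\rm tr}(\bm{\Sigma}_{l}^{3}\bm{\Sigma}_{s})$, ${\rm tr}(\bm{\Sigma}_{l}\bm{\Sigma}_{s})^{2}$ and ${\rm tr}(\bm{\Sigma}_{l}\bm{\Sigma}_{s}\bm{\Sigma}_{l}\bm{\Sigma}_{h})$ — are all $o\{{\rm tr}^{2}((\sum_{i}\bm{\Sigma}_{i})^{2})\}$ by \CG{A3}, whereas the remaining factors are products of two second-order traces, each bounded by ${\rm tr}((\sum_{i}\bm{\Sigma}_{i})^{2})$, using \eqref{eq16} for the terms involving ${\rm diag}(\cdot)$. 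Hence ${\rm III}=O(n^{3})\,{\rm tr}^{2}((\sum_{i}\bm{\Sigma}_{i})^{2})$. Under \CG{A1} all coefficients in $\sigma_{T}^{2}$ are bounded away from $0$ and $\infty$, so $\sigma_{T}^{2}$ and ${\rm tr}((\sum_{i}\bm{\Sigma}_{i})^{2})$ are of the same order; therefore $n^{-4}\,{\rm III}=O(n^{-1})\sigma_{T}^{4}$, and summing over the finitely many groups $l=1,\dots,k$ gives $\sum_{j=2}^{n}{\rm E}(D_{j}^{4})=O(n^{-1})\sigma_{T}^{4}=o(\sigma_{T}^{4})$, completing the proof.

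The main obstacle is this last order count. The crude estimate $\sum_{j\in\Lambda_{l}}{\rm E}\{({\rm E}(D_{j}^{2}|\mathscr{C}_{j-1}))^{2}\}\le{\rm E}(G_{l}^{2})$ only yields $O(\sigma_{T}^{4})$, which is not small enough; one genuinely needs to observe that ${\rm III}$, being the ``diagonal'' part of ${\rm E}(G_{l}^{2})$, carries an extra factor $n^{-1}$, and this forces one to track the power of $n$ in each term of \eqref{eq12} rather than only its trace order. The other ingredient requiring care is the fourth-moment identity for the linear form $\bm{Z}_{lj}^{\trans}\bm{b}$, which is precisely where the independence-of-coordinates and finite-kurtosis assumptions of the model \eqref{eq2} are used.
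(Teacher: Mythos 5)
Your proposal is correct and follows essentially the same route as the paper's proof: bound the Lindeberg sum by $\epsilon^{-2}\sigma_T^{-4}\sum_j {\rm E}(D_j^4|\mathscr{C}_{j-1})$, control the conditional fourth moment by $Cn^{-4}(\bm{H}_{j-1}\trans\bm{\Sigma}_l\bm{H}_{j-1})^2$ using the model's independent-coordinate, finite-kurtosis structure (your linear-form identity is the same computation as the paper's quadratic-form bound with the ${\rm diag}$ and trace inequalities), and then reduce to $n^{-4}\,\mbox{III}=o(\sigma_T^4)$ via the expansion in \eqref{eq12}. The only difference is cosmetic: you spell out the order count $n^{-4}\,\mbox{III}=O(n^{-1})\sigma_T^4$ that the paper leaves implicit, which is a welcome clarification rather than a deviation.
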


\begin{proof}
Firstly, it is easy to obtain
\begin{align*}
\sum\limits_{j=2}^{n}\sigma_{T}^{-2}{\rm{E}}\left\{D_{j}^{2}I(|D_{j}|>\epsilon\sigma_{T})|\mathscr{C}_{j-1}\right\}\leq\sum\limits_{j=2}^{n}\epsilon^{-2}\sigma_{T}^{-4}{\rm{E}}(D_{j}^{4}|\mathscr{C}_{j-1}).
\end{align*}
So, we only need to prove
\begin{align}\label{eq17}
{\rm{E}}\left\{\sum\limits_{j=2}^{n}{\rm{E}}(D_{j}^{4}|\mathscr{C}_{j-1})\right\}=o(\sigma_{T}^{4}).
\end{align}

Let
$K_{l}=\sum\limits_{j\in\Lambda_{l}}{\rm{E}}(D_{j}^{4}|\mathscr{C}_{j-1})$,
then
$\sum\limits_{j=2}^{n}{\rm{E}}(D_{j}^{4}|\mathscr{C}_{j-1})=\sum\limits_{l=1}^{k}K_{l}$.
Further calculations can lead to
\begin{align*}
K_{l}&=n^{-4}\sum\limits_{j\in\Lambda_{l}}{\rm{E}}\left\{(\bm{C}_{j}^{\trans}\bm{H}_{j-1}\bm{H}_{j-1}^{\trans}\bm{C}_{j})^{2}|\mathscr{C}_{j-1}\right\}\\
&=n^{-4}\sum\limits_{j\in\Lambda_{l}}\Big\{\gamma{\rm{tr}}\left[\bm{\Gamma}_{l}^{\trans}\bm{H}_{j-1}\bm{H}_{j-1}^{\trans}{\bm{\Gamma}_{l}}{\rm{diag}}(\bm{\Gamma}_{l}^{\trans}\bm{H}_{j-1}\bm{H}_{j-1}^{\trans}{\bm{\Gamma}_{l}})\right]\\
&\quad+2{\rm{tr}}(\bm{\Gamma}_{l}^{\trans}\bm{H}_{j-1}\bm{H}_{j-1}^{\trans}{\bm{\Gamma}_{l}})^{2}+(\bm{H}_{j-1}^{\trans}{\bm{\Sigma}_{l}}\bm{H}_{j-1})^{2}\Big\}\\
&{\leq}(3+\gamma)n^{-4}\sum\limits_{j\in\Lambda_{l}}(\bm{H}_{j-1}^{\trans}{\bm{\Sigma}_{l}}\bm{H}_{j-1})^{2},
\end{align*}
where the last inequality is based on
${\rm{tr}}(\bm{\Gamma}_{l}^{\trans}\bm{H}_{j-1}\bm{H}_{j-1}^{\trans}{\bm{\Gamma}_{l}})^{2}{\leq}(\bm{H}_{j-1}^{\trans}{\bm{\Sigma}_{l}}\bm{H}_{j-1})^{2}$
and
\begin{align*}
{\rm{tr}}\left\{\bm{\Gamma}_{l}^{\trans}\bm{H}_{j-1}\bm{H}_{j-1}^{\trans}{\bm{\Gamma}_{l}}{\rm{diag}}(\bm{\Gamma}_{l}^{\trans}\bm{H}_{j-1}\bm{H}_{j-1}^{\trans}{\bm{\Gamma}_{l}})\right\}{\leq}{\rm{tr}}^{2}(\bm{\Gamma}_{l}^{\trans}\bm{H}_{j-1}\bm{H}_{j-1}^{\trans}{\bm{\Gamma}_{l}})=(\bm{H}_{j-1}^{\trans}{\bm{\Sigma}_{l}}\bm{H}_{j-1})^{2}.
\end{align*}

According to equations (\ref{eq11}) and (\ref{eq12}), we get
\begin{align*}
{\rm{E}}(K_{l})&{\leq}(3+\gamma)n^{-4}\mbox{III}=o(\sigma_{T}^{4}),
\end{align*}
which implies
\begin{align*}
{\rm{E}}\left\{\sum\limits_{j=2}^{n}{\rm{E}}(D_{j}^{4}|\mathscr{C}_{j-1})\right\}=\sum\limits_{l=1}^{k}{\rm{E}}(K_{l})=o(\sigma_{T}^{4}).
\end{align*}
Then the required result follows.
\end{proof}

\section{{\bf{Proof of Theorem \CG{3.1}}}}
\indent From Lemmas \CG{A.1}-\CG{A.3}, as $n, p\to\infty$, we have
\begin{align*}
\frac{T}{\sigma_{T}}\overset{d}\longrightarrow N(0,1).
\end{align*}

By Lemmas \CG{3.1} and \CG{3.2}, we obtain
\begin{align*}
\widehat{T}_{1}=\frac{T}{\sigma_{T}}\frac{\sigma_{T}}{\widehat{\sigma_{T}}}\overset{d}\longrightarrow
N(0,1)~~~\mbox{and}~~~
\widehat{T}_{2}=\frac{T}{\sigma_{T}}\frac{\sigma_{T}}{\widetilde{\sigma_{T}}}\overset{d}\longrightarrow
N(0,1)
\end{align*}
from Slutsky's Theorem (Ferguson (1996)).

\section{{\bf{Proof of Theorem \CG{3.2}}}}
\indent According to Lemmas \CG{A.1}-\CG{A.3}, as $n, p\to\infty$,
we get
\begin{align*}
\frac{T-{\rm{E}}(T)}{\sigma_{T}}\overset{d}\longrightarrow N(0,1).
\end{align*}

On the other hand, we have
\begin{align*}
\frac{{\rm{E}}(T)}{\sigma_{T}}-\frac{\dfrac{n}{\sqrt{2}}\sum\limits_{l=1}^{k}\lambda_{l}(\mu_{l}-\widetilde{\mu})^{\trans}(\mu_{l}-\widetilde{\mu})}{\sqrt{\sum\limits_{l=1}^{k}(1-\lambda_{l})^{2}{\rm{tr}}(\bm{\Sigma}_{l}^{2})+\sum\limits_{l\neq{s}}^{k}\lambda_{l}\lambda_{s}{\rm{tr}}(\bm{\Sigma}_{l}\bm{\Sigma}_{s})}}\longrightarrow
0.
\end{align*}

Therefore, as $n, p\to\infty$, we obtain
\begin{align*}
P(\widehat{T}\geq\xi_{\alpha})-\Phi\left\{-\xi_{\alpha}+\frac{\dfrac{n}{\sqrt{2}}\sum\limits_{l=1}^{k}\lambda_{l}(\mu_{l}-\widetilde{\mu})^{\trans}(\mu_{l}-\widetilde{\mu})}{\sqrt{\sum\limits_{l=1}^{k}(1-\lambda_{l})^{2}{\rm{tr}}(\bm{\Sigma}_{l}^{2})+\sum\limits_{l\neq{s}}^{k}\lambda_{l}\lambda_{s}{\rm{tr}}(\bm{\Sigma}_{l}\bm{\Sigma}_{s})}}\right\}\longrightarrow
0,
\end{align*}
which implies the result is right.

\newpage

\end{document}